\documentclass[12pt,english]{article}
\usepackage{babel}
\usepackage[cp1250]{inputenc}
\usepackage[T1]{fontenc}
\usepackage{amsfonts}
\usepackage{amsmath}
\usepackage{amsthm}
\usepackage{mathrsfs}
\usepackage{hyperref}
\setlength{\topmargin}{-0.5cm} \setlength{\oddsidemargin}{-0.0cm}
\setlength{\textheight}{21.5cm} \setlength{\textwidth}{16cm}
\usepackage{enumerate}
\usepackage{graphicx}
\usepackage{pictex}
\usepackage[dvipsnames]{xcolor}

\newcommand{\eqq}[2]{\begin{equation}  #1  \label{#2}
\end{equation}    }
\newcommand*{\norm}[1]{\left\Vert{#1}\right\Vert}

\def\divv{\operatorname{div}}
\newcommand{\hd}{\hspace{0.2cm}}
\newcommand{\no}{\noindent}
\newcommand{\m}[1]{\mbox{#1}}
\newcommand{\rr}{\mathbb{R}}

\newcommand{\lap}{\Delta}
\newcommand{\lapk}{\Delta^{2}}

\newcommand{\nal}{\nabla \Delta}

\newcommand{\nab}{\nabla}
\newcommand{\nabd}{\nabla^{2}}
\newcommand{\nabt}{\nabla^{3}}

\newcommand{\re}[1]{\frac{1}{#1}}

\newcommand{\jd}{\frac{1}{2}}
\newcommand{\td}{\frac{3}{2}}
\newcommand{\jc}{\frac{1}{4}}
\newcommand{\tc}{\frac{3}{4}}
\newcommand{\ddt}{\frac{d}{dt}}

\newcommand{\bk}[1]{\left|  #1 \right|^{2}}

\newcommand{\nd}[1]{\left\| #1  \right\|_{2}}

\newcommand{\ns}[1]{\| #1  \|_{6}}
\newcommand{\nsk}[1]{\| #1  \|_{6}^{2}}

\newcommand{\nt}[1]{\| #1  \|_{3}}

\newcommand{\nc}[1]{\| #1  \|_{4}}
\newcommand{\nck}[1]{\| #1  \|_{4}^{2}}

\newcommand{\nj}[1]{\| #1  \|_{1}}
\newcommand{\ndk}[1]{\| #1  \|_{2}^{2}}
\newcommand{\nif}[1]{\left\| #1  \right\|_{\infty}}
\newcommand{\nifk}[1]{\left\| #1  \right\|_{\infty}^{2}}

\newcommand{\nsod}[1]{\| #1 \|_{2,2} }

\newcommand{\nsotk}[1]{\| #1 \|_{3,2}^{2} }

\newcommand{\nsodk}[1]{\| #1 \|_{2,2}^{2} }

\newtheorem{rem}{{\textbf {Remark}}}

\newtheorem{prop}{{\textbf {Proposition}}}
\newtheorem{theorem}{\textbf {Theorem}}
\newtheorem{coro}{\textbf  {Corollary} }

\newcommand{\vt}{v_{,t}}
\newcommand{\ot}{\omega_{,t}}
\newcommand{\bt}{b_{,t}}
\newcommand{\om}{\omega}
\newcommand{\Om}{\Omega}
\newcommand{\OmT}{\Om^{T}}
\newcommand{\bno}{\frac{b}{\om}}
\newcommand{\dvk}{|D(v)|^{2}}

\newcommand{\bmi}{b_{\min}}
\newcommand{\bma}{b_{\max}}
\newcommand{\omi}{\om_{\min}}
\newcommand{\oma}{\om_{\max}}
\newcommand{\bmt}{b_{\min}^{t}}

\newcommand{\omt}{\om_{\min}^{t}}
\newcommand{\ommt}{\om_{\max}^{t}}
\newcommand{\bmmt}{b_{\max}(t)}

\newcommand{\mnit}{\mu^{t}_{\min}}

\newcommand{\kd}{\kappa_{2}}

\newcommand{\bmit}{\frac{\bmi}{\left( 1 + \kd \oma t \right)^{\frac{1}{\kd}}}}

\newcommand{\omitt}{\frac{\omi}{1 + \kd \omi t}}
\newcommand{\omittau}{\frac{\omi}{1 + \kd \omi \tau}}
\newcommand{\omits}{\frac{\omi}{1 + \kd \omi s}}
\newcommand{\omat}{\frac{\oma}{1 + \kd \oma t}}

\newcommand{\V}{\mathcal{V}}

\newcommand{\Vt}{\V^{3}}
\newcommand{\Vj}{\V^{1}}

\newcommand{\Vkd}{\dot{\V}_{\divv}}
\newcommand{\Vkdj}{\dot{\V}_{\divv}^{1}}

\newcommand{\Vkdt}{\Vkd^{3}}

\newcommand{\n}[1]{ \left( #1  \right)}

\newcommand{\nabk}{\nabla^{2}}

\newcommand{\ts}{t^{*}}
\newcommand{\tjs}{t_{1}^{*}}
\newcommand{\tskd}{\ts_{K,\delta}}

\newcommand{\wt}[1]{\widetilde{ #1}}
\newcommand{\comkd}{C_{\Om,\kd}}

\newcommand{\jkt}{(1+\kd \oma t )}

\newcommand{\Ts}{T^{*}}

\newcommand{\iOm}{\int_{\Om}}

\newcommand{\ommm}{(\om - \omt)_{-}}
\newcommand{\ommp}{(\om - \ommt)_{+}}

\newcommand{\bbtmm}{(b-\bmt)_{-}}
\newcommand{\bom}{\frac{b}{\om}}

\newcommand{\vj}{v^{1}}
\newcommand{\bj}{b^{1}}
\newcommand{\oj}{\om^{1}}
\newcommand{\vd}{v^{2}}
\newcommand{\bd}{b^{2}}
\newcommand{\od}{\om^{2}}
\newcommand{\bjoj}{\frac{\bj}{\oj}}
\newcommand{\bdod}{\frac{\bd}{\od}}
\newcommand{\boj}{\frac{b}{\oj}}
\newcommand{\joj}{\frac{1}{\oj}}
\newcommand{\jojod}{\frac{1}{\oj\od}}

\newcommand{\dl}{\delta}
\newcommand{\dlj}{\dl_{1}}
\newcommand{\dld}{\dl_{2}}
\newcommand{\dlt}{\dl_{3}}

\newcommand{\lapy}{\| \lap v_{0}\|_{2}^{2}+\| \lap \om_{0}\|_{2}^{2}+\| \lap b_{0}\|_{2}^{2}}

\newcommand{\eqns}[1]{
\begin{eqnarray*}
\begin{split}
#1
\end{split}
\end{eqnarray*}
}
\newcommand{\eqnsl}[2]{
\begin{equation}
\label{#2}
\begin{split}
#1
\end{split}
\end{equation}
}
\newcommand{\eqn}[1]{
\begin{eqnarray*}
\begin{split}
#1
\end{split}
\end{eqnarray*}
}

\begin{document}
\title{Global in time solution to Kolmogorov's two-equation model of turbulence with small initial data}

\author{Przemys\l aw Kosewski, Adam Kubica \footnote{Department of Mathematics and Information Sciences, Warsaw University of Technology, ul. Koszykowa 75,
00-662 Warsaw, Poland, E-mail addresses: A.Kubica@mini.pw.edu.pl, P.Kosewski@mini.pw.edu.pl} }

\maketitle

\abstract{We prove the existence of global in time solution to Kolmogorov's two-equation model of turbulence in three dimensional domain with periodic boundary conditions under smallness assumption imposed on initial data.   }

\vspace{0.3cm}

\no Keywords: Kolmogorov's two-equation model of turbulence, global in time solution, small data.

\vspace{0.2cm}

\no AMS subject classifications (2010): 35Q35, 76F02.

\section{Introduction}

An analysis of turbulent motion is one of the most challenging scientific problems. There are many models (e.g. $k - \varepsilon$, $k - \omega$, see \cite{Lew}) that give us some insight onto this phenomenon, but our understanding of it is still insufficient. One of the models, proposed by A. N. Kolmogorov in 1941, is the two-equation model of turbulence. To the best of our knowledge, only a few research papers are devoted to the mathematical analysis of this problem.  Firstly, we would like to recall the  Kolmogorov's two equation turbulence model
\eqq{\vt + \divv(v\otimes v) -  2\nu_{0} \divv \n{ \bno  D(v)} = - \nabla p  , }{a}
\eqq{ \ot + \divv(\om v ) - \kappa_{1} \divv \n{ \bno \nabla \om } = - \kappa_{2} \om^{2},   }{b}
\eqq{\bt + \divv(b v   )   - \kappa_{3} \divv \n{  \bno \nabla b }  = - b \om + \kappa_{4} \bno \dvk,   }{c}
\eqq{\divv{v} =0,}{d}
where $v$ - mean velocity, $\omega$ - dissipation rate, $b$ - 2/3 of mean kinetic energy, $p$ - sum of
mean pressure and $b$.
Despite of its huge importance, it still remains relatively little-studied. For a more exhaustive introduction to Kolmogorov's two equation turbulence model we refer to \cite{BuM}, \cite{Lew}, \cite{Kolmog}, \cite{KoKu}, \cite{MiNa}, \cite{MiNaa}, \cite{Spal}.

Now, we will shortly describe the known results concerning this model. In \cite{BuM}, the authors consider the system in a bounded  $C^{1,1}$ domain with mixed boundary conditions for $b$ and $\om$ and stick-slip boundary condition for velocity $v$. In order to overcome the difficulties related with the last term on the right hand side of (\ref{c}) the problem is reformulated and the quantity $E:=\frac{1}{2}|v|^{2}+ \frac{2\nu_{0}}{\kappa_{4}}b$ is introduced. Then, the equation (\ref{c}) is replaced by
\[
E_{,t}+ \divv(v(E+p))- 2\nu_{0}\divv\left( \frac{\kappa_{3} b}{\kappa_{4}\om}\nab b+ \frac{b}{\om} D(v)v \right)+\frac{2\nu_{0}}{\kappa_{4}}b\om=0.
\]
Then, there is established the existence of global-in-time weak solution to the reformulated problem. It is also worth mentioning that in \cite{BuM} the assumption related to the initial value of $b$ admit vanishing of $b_{0}$ in some points of the domain. More precisely, the existence of weak solution is proved under the conditions $b_{0}\in L^{1}$, $b_{0}>0$ a.e. and $\ln{b_{0}}\in L^{1}$.

In the paper \cite{MiNa}, the system (\ref{a})-(\ref{d})  is considered in periodic domain. It is proved the existence of global-in-time weak solution, but due to the presence of strongly nonlinear term $\bno \dvk$,  the weak form of equation (\ref{c}) have to be corrected by a positive measure $\mu$, which is zero, provided weak solution is sufficiently regular. There are also obtained the estimates for $\om$ and $b$ (see (4.2) in \cite{MiNa}). These observations are crucial in our reasoning presented below. Concerning the initial value of $b$, the authors assume that $b_{0}$ is uniformly positive.

In \cite{KoKu} local-in-time existence of solution to the system (\ref{a})-(\ref{d}) with periodic boundary condition is studied. More precisely, if the initial data belongs to Sobolev space $H^2(\Omega)$ and  $b_0(x) \geq \bmi > 0 $, $\om_0(x) \geq \omi > 0 $, then there exists a "regular" solution defined on some interval  $[0,t^*)$. Furthermore, it is showed that the  solution belongs to $L^2(0,t^*;H^3(\Omega))\cap H^1(0,t^*;H^1(\Omega))\cap L^\infty(0,t^*;H^2(\Omega))$. Additionally, an estimate for minimal time of existence of solution in terms of initial data is proven. This last result is crucial in our proof of the existence of global-in-time solution.

It is worth to mention the other publications regarding mathematical analysis of turbulence models e.g: in \cite{Pares} the author analyses 0-equation model of turbulence (the turbulent viscosity is related with  the symmetric gradient of velocity only).  In \cite{NaumannV2} it is analysed a simplified 1-equation model  of turbulence (Prandtl's model, see \cite{Prandtl}). In the paper \cite{HB} a  stationary 1-equation model of turbulence in porous medium is studied. %In \cite{NaumannV3} the author considers 1-equation turbulence model in a pipe domain with the inclusion of wall distance effects.
The paper \cite{Dreyfus} is devoted to  a simplified scalar version of the RANS model arising in oceanography.
Very recently in \cite{Fanelli} the authors studied a system very closely related to one-dimensional Kolmogorov system. Local well-posedness was shown even with vanishing mean turbulent kinetic energy. It was also proved that for some smooth initial data the obtained solutions blow-up in finite time.

In the presented paper we formulate the smallness conditions, which guarantee the global-in-time existence of solution to (\ref{a})-(\ref{d}). These results are given in Theorem~\ref{TW_GLOBAL} and Corollary~\ref{coro_glob}.

At the outset  we will establish  the  basic notation. Assume that $\Omega = \prod_{i=1}^{3}(0,L_{i}) $, \hd $L_{i}>0$ and $\Omega^{T}=\Omega \times (0,T)$.  We  shall consider  problem (\ref{a})-(\ref{d}) in $\Om^{T}$, where
\eqq{v,\om ,b \hd \m{ are periodic on }  \hd \Om, \hd \hd \int_{\Om} vdx =0,}{ddod}
with initial condition
\eqq{v_{|t=0}= v_{0}, \hd \hd \om_{|t=0}= \om_{0}, \hd \hd b_{|t=0}= b_{0}.}{e}
Here $\nu_{0}, \kappa_{1}, \dots, \kappa_{4}$ are positive constants. We assume that all these constants except $\kd$ are equal to one. As we will see,  $ \kd $ plays a special role in our system  and  it determines the long-time behaviour of the fraction $ \frac{b}{\om}$.
We additionally assume that there exist positive numbers $\bmi$, $\omi$, $\oma$ such that
\eqq{0<\bmi\leq b_{0}(x),  }{f}
\eqq{0<\omi\leq \om_{0}(x) \leq \oma  }{g}
on $\Omega$. In the next section we will introduce notation dedicated to formulate smallness conditions as well as auxiliary theorem that will be useful in further part of work.
\section{Notation and auxiliary theorem}
In this section we introduce the notation. We will use the standard notation
\[
\| f \|_{p}= \left( \int_{\Om} |f(x)|^{p} dx \right)^{\frac{1}{p}}
\]
and  we set
\eqq{
\omt = \omitt, \hd \hd
\ommt = \omat. \hd \hd
}{omMinMaxt}
These quantities will appear in the  lower and upper  bound  for $\om$  (see Proposition~\ref{oszac_og}). Additionally, we introduce the analogous notation for $b$, for the lower bound for $b$ and the upper bound for $\| b\|_{1}$ (see Proposition~\ref{oszac_og} and Proposition~\ref{propEstimates}c)
\eqnsl{
\bmt  = \bmit,  \hd
\bmmt  =
\frac{\nj{b_0} + \jd \ndk{v_0}\n{
1 + I_\infty \n{\kd, \frac{\omi}{\oma}, \frac{\bmi}{\n{\oma}^2}}
}}{\n{1 + \kd \omi t }^\frac{1}{\kd}},
}{bMinMaxt}
where
%\eqnsl{
%I_\infty & \n{\kd, \frac{\omi}{\oma}, \frac{\bmi}{\n{\oma}^2}}  \\ &
% = \Gamma\n{\frac{2\kd }{2\kd - 1}}
%\n{\frac{\omi}{\oma}}^{\min \left\{ 1, \frac{1}{\kd} \right\}}
%\n{\frac{C_p^2 (\oma)^2 (2\kd -1)}{2\bmi}
%\exp \n{\frac{2\bmi}{C^{2}_{p} \oma^{2}}   }
%}^{\frac{1}{2\kd -1}},
%}{IinfDef}
%CZY TAK:
\eqnsl{
I_\infty  \n{\kd, x, y}
 = \Gamma\n{\frac{2\kd }{2\kd - 1}}
x^{\min \left\{ 1, \frac{1}{\kd} \right\}}
\n{\frac{C_p^2  (2\kd -1)}{2y}
\exp \n{\frac{2y}{C^{2}_{p} }   }
}^{\frac{1}{2\kd -1}},
}{IinfDef}
and $C_{p}$ is Poincar\'e constant for the domain $\Om$, i.e. the smallest constant such that  $\|f\|_{p}\leq C_{p}\| \nab f\|_{p} $ for  smooth $f$ such that $\int_{\Om} fdx =0$.
In case of $b$ we will  be able to control the decay of $L^1$-norm. Frequently, we will estimate from below the coefficient in the diffusive term by (see (\ref{below}))
\eqq{
\mnit = \frac{\bmt}{\ommt}= \frac{\bmi}{\oma}\jkt^{1-\frac{1}{\kd}}.
}{mnitDef}
To express the smallness of initial data we will need the following quantity
\eqnsl{
Y_{2}(t)=  \Big(\ndk{\lap b_0} + &  \ndk{\lap \om_0 } + \ndk{\lap v_0 } \Big) \cdot  \\
 \cdot  &  \exp \n{ - \frac{1}{C_{p}^{2}} \frac{\bmi }{(2\kd - 1)\oma^2}  \n{ \n{1 + \kd \oma t}^{2 - 1/\kd} - 1}}.\\
}{Y_0t}
Furthermore, to formulate a condition that ensures the existence of global-in-time solution we have to define  (see (\ref{GLOB_ADD}) in Theorem~\ref{TW_GLOBAL})
\eqq{
A(t) =
\n{\ndk{v_0}
\exp \n{ -\frac{
2\bmi \n{\n{1 + \kd \oma t }^{2 - \frac{1}{\kd}}-1} }{C^{2}_{p} \oma^{2} \n{2 \kd - 1 }}   }
+ \bmmt^2}^\jc,
}{def_A}
\eqq{
B(t) =
1 + \re{\omt} + \frac{\bmmt}{\omt} + \frac{\bmmt}{(\omt)^2} ,
}{def_B}
\eqq{
C(t) =
%\re{\omt} + \re{(\omt)^2} + \frac{\bmmt}{\omt} + \frac{\bmmt}{(\omt)^2} + \frac{\bmmt}{(\omt)^3},
\re{\omt} + \re{(\omt)^2} + \frac{\bmmt}{(\omt)^2} + \frac{\bmmt}{(\omt)^3},
}{def_C}
\eqq{
%D(t) = \re{\omt} + \re{(\omt)^2} + \re{(\omt)^3}
D(t) = \re{(\omt)^2} + \re{(\omt)^3}
}{def_D}
and
\eqq{
Z_0(t) =
\Big( \bmmt +
A(t) Y_2^\jc(t)
+ B(t) Y_2^\jd(t)
+ C(t) Y_2 (t)
+ D(t) Y_2^\td(t)
\Big).
}{def_Z0}
Now, let us  define function spaces. If $m\in \mathbb{N}$, then  we denote by $\V^{m}$ the space of restrictions to $\Om$ of the functions, which belong to the space
\eqq{\{u \in H^{m}_{loc}(\rr^{3}): \hd  u(\cdot + kL_{i}e_{i}) = u(\cdot ) \hd \m{ for } \hd k\in \mathbb{Z}, \hd  i=1,2,3 \}.}{j}
Next, we set
\eqq{\Vkd^{m} = \{v\in \V^{m}:\hd \divv v =0, \hd \int_{\Om} vdx =0  \}.}{k}
We shall find global solution of the system (\ref{a})-(\ref{e}) such that $(v,\om,b)\in \mathcal{X}(T)$, where
\eqq{
\mathcal{X}(T)= L^{2}_{loc}([0,T);\Vkdt)\times (L^{2}_{loc}([0,T);\Vt))^2) \cap (H^{1}_{loc}([0,T);H^{1}(\Om)))^{5}.
}{i}
We denote by $\| \cdot \|_{k,2}$ the norm in the  Sobolev space, i.e.
\[
\| f \|_{k,2} = (\ndk{\nab^{k} f } + \ndk{f})^{\frac{1}{2}},
\]
where $\| \cdot \|_{2}$ is $L^{2}$ norm on $\Om$.

Now, we introduce the notion of solution to the system (\ref{a})-(\ref{d}). We shall show that for any $v_{0}\in \Vkd^{2}$ and strictly positive $\om_{0}$, $b_{0}\in \V^{2}$, if $H^{2}$ norms of the initial data are sufficiently small, then there exist  $(v, \om, b)\in \mathcal{X}(\infty)$ such that
\eqq{(\vt, w) - (v\otimes v,\nabla w) +    \n{ \mu  D(v), D(w)} = 0  \hd \m{ for } \hd w\in \Vkd^{1}, }{aa}
\eqq{ (\ot, z) - (\om v, \nabla z  ) + \n{ \mu \nabla \om, \nabla z  } = - \kd (\om^{2}, z ) \hd \m{ for } \hd z\in \V^{1},    }{ab}
\eqq{(\bt, q) - (b v, \nab q   )   +\n{  \mu  \nabla b, \nabla q  }  = - (b \om , q) + ( \mu \dvk, q) \hd \m{ for } \hd q\in \V^{1},  }{ac}
for a.a. $t\in (0,T)$, where $\mu= \frac{b}{\om}$ and (\ref{e}) holds. Recall that  $D(v)$ denotes the symmetric part of $\nabla v$ and $(\cdot , \cdot )$ is inner product in $L^{2}(\Om)$.

In \cite{KoKu} it was shown that for appropriately regular initial data there exists local-in-time regular solution. We recall this result below.
\begin{theorem}[theorem 1 \cite{KoKu}]
Suppose  that  $\om_{0}$, $b_{0}\in \V^{2}$,  $v_{0}\in \Vkd^{2}$ and (\ref{f}), (\ref{g}) are satisfied. Then there exist positive $\ts$ and $(v,\om , b)\in \mathcal{X}(\ts)$ such that  (\ref{e}), (\ref{aa})-(\ref{ac}) holds for a.a. $t\in (0,\ts)$.
Furthermore, for each $(x,t)\in \Om\times [0,\ts)$ the following estimates
\eqq{\frac{\omi}{1+\kd \omi t} \leq \om(x,t) \leq \frac{\oma}{1+\kd \oma t},}{newc}
\eqq{\frac{\bmi}{(1+\kd \oma t)^{\frac{1}{\kd}}}  \leq b(x,t) }{newd}
hold. The time of existence of solution is estimated from below in the following sense: for each positive $\delta$ and compact $K\subseteq \{ (a,b,c): 0<a\leq b, \hd 0<c \}$ there exists positive $\tskd$, which depends only on $\kd, \Om, \delta$ and $K$ such that if
\eqq{  \nsodk{v_{0}}+\nsodk{\om_{0}}+\nsodk{b_{0}}\leq \delta  \hd \m{ and } \hd (\omi, \oma, \bmi)\in K,}{uniformly}
then $\ts\geq \tskd$.
\label{LOCALNE_TH}
\end{theorem}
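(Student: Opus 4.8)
The plan is to build the local solution by a linearization-plus-fixed-point scheme and then extract the pointwise bounds (\ref{newc})--(\ref{newd}) from comparison principles. Writing $\mu=\bno$, the three evolution equations (\ref{a}),(\ref{b}),(\ref{c}) are quasilinear parabolic with top-order operators $-\divv(\mu D(v))$, $-\kj\divv(\mu\nab\om)$ and $-\kt\divv(\mu\nab b)$, which are uniformly parabolic precisely as long as $\mu$ is trapped between two positive constants. First I would freeze the data: given an iterate $(\tilde v,\tilde\om,\tilde b)$ in a ball of $L^\infty(0,\ts;\Vd)$ on which $\tilde\om$ is bounded above and below and $\tilde b$ is bounded below (so that $\tilde\mu=\tilde b/\tilde\om$ lies in a fixed positive interval), I solve the decoupled \emph{linear} problems for $v$ (a Stokes-type system with eddy viscosity $\tilde\mu$ and convection by $\tilde v$), for $\om$, and for $b$, the last carrying the source $\kc\tilde\mu\,\dvk$ computed from the freshly found $v$. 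Standard linear parabolic theory places each iterate in $\mathcal{X}(\ts)$, and on a sufficiently short interval the map $(\tilde v,\tilde\om,\tilde b)\mapsto(v,\om,b)$ is a contraction in a lower-order norm such as $C([0,\ts];\Vj)$; its fixed point is the desired solution.

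The technical core is the a priori $H^2$ estimate that makes the fixed-point ball invariant. I would apply $\lap$ to each equation and test with $\lap v$, $\lap\om$, $\lap b$. Using $\divv v=0$ and integrating by parts, the parabolic terms produce a dissipation bounded below by a positive multiple of $\mu_{\min}$ times the $H^3$ seminorms, while the convective terms, the quadratic sink $-\kd\om^2$, the term $-b\om$ and the commutators are controlled by the three-dimensional embeddings $\Vd\hookrightarrow L^\infty$ and $\Vj\hookrightarrow L^6$, with Young's inequality absorbing each highest-order factor into the dissipation. The outcome is a differential inequality $\ddt\Phi\le P(\Phi)$, where $\Phi=\ndk{\lap v}+\ndk{\lap\om}+\ndk{\lap b}$ and $P$ is a polynomial whose coefficients depend only on $\Om$, $\kd$ and on the running lower bound of $\mu$.

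The pointwise bounds follow from the scalar equations once $v,\om,b$ are smooth. Since $\divv v=0$, equation (\ref{b}) reads $\ot+v\cdot\nab\om-\kj\divv(\mu\nab\om)=-\kd\om^2$, and the spatially constant functions $\omitt$ and $\omat$ solve $y'=-\kd y^2$ with data $\omi$ and $\oma$; comparing $\om$ against them (at an interior extremum of the difference the convective term vanishes and the diffusion term has the favourable sign) gives (\ref{newc}). For (\ref{c}) the production $\kc\mu\,\dvk$ is nonnegative, so discarding it and using $\om\le\omat$ yields $\bt+v\cdot\nab b-\kt\divv(\mu\nab b)\ge -b\,\omat$; comparison with the solution of $y'=-y\,\omat$, namely $\bmi(1+\kd\oma t)^{-1/\kd}$, gives the lower bound (\ref{newd}).

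Finally, the uniform existence time follows by quantifying the differential inequality. Under (\ref{uniformly}) the initial quantity $\Phi(0)$ is at most a fixed function of $\delta$, and because $(\omi,\oma,\bmi)$ ranges over the compact $K$, the bounds (\ref{newc})--(\ref{newd}) keep $\mu$ bounded below by a positive constant depending only on $K$ and $\kd$ on any fixed interval; hence the coefficients of $P$ are uniform, and the comparison ODE for $\Phi$ stays finite up to a time $\tskd$ depending only on $\kd,\Om,\delta,K$. I expect the main obstacle to be the strongly nonlinear production term $\kc\bno\dvk$: at the $H^2$ level it forces control of $\lap(\mu\,\dvk)$, i.e. of products such as $\lap\mu\,(\nab v)^2$, $\nab\mu\,\nab v\,\nabd v$ and $\mu\,(\nabd v)^2$, which in three dimensions sit at the borderline of the embeddings above, so the estimate can be closed only by pairing $L^\infty$, $L^6$ and $L^2$ norms just so and absorbing the top-order $v$-factor into the $H^3$ dissipation — which is exactly where the lower bound on $\mu$ is indispensable.
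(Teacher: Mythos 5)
Your strategy is sound in outline, but it is genuinely different from the construction behind Theorem~\ref{LOCALNE_TH}, whose proof lives in \cite{KoKu} (the present paper only quotes the statement). There the solution is built not by linearization and contraction but by a Galerkin scheme in which the coefficient $\mu=b/\om$ is replaced by a truncated $\mu_{\Psi_t\Phi_t}$ (cut-offs applied to $b$ and $\om$), so uniform parabolicity is enforced by construction rather than propagated through iterates; one proves the $H^2$ differential inequality uniformly in the Galerkin parameter, passes to the limit by compactness, and only afterwards removes the truncation by showing the limit satisfies (\ref{newc})--(\ref{newd}). Crucially, those pointwise bounds are obtained not by a classical comparison at interior extrema but by testing the weak formulation with the truncations $\ommm$, $\ommp$ and $\bbtm$ together with a Gronwall and continuation argument --- exactly the argument reproduced as Proposition~\ref{oszac_og} in this paper. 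This difference is not cosmetic. First, your comparison-principle step presupposes classical regularity: a solution in $\mathcal{X}(\ts)$ gives $\om,b\in C(\overline{\Om}\times[0,\ts))$ but no pointwise second derivatives, so ``at an interior extremum the diffusion term has the favourable sign'' is not justified in this class, whereas the truncation tests work directly at the weak level and also organize the ordering issue (nonnegativity of $b$ and $\om\le\omat$ must be secured \emph{before} the comparison for $b$, which the paper handles by a bootstrap on a maximal time interval). Second, in your fixed-point scheme the invariance of the ball requires that each linear iterate again satisfy the pointwise bounds --- a weak maximum principle for linear parabolic problems with coefficients merely in $L^\infty(0,\ts;\Vd)$ --- plus $H^3$ regularity theory for the Stokes-type system with variable viscosity; all of this is doable, but it is precisely the linear machinery that the Galerkin-plus-truncation route avoids, needing nothing beyond energy estimates. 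What your route buys is uniqueness of the fixed point for free; what the paper's route buys is robustness at low regularity.

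One concrete soft spot in your closing of the $H^2$ estimate: for the production term, the pairing you gesture at, e.g.\ $\nif{\mu}\,\nt{D(v)}\,\ns{\nab D(v)}\,\nd{\nal b}$, leaves the $H^3$ dissipation with power exactly $1$ multiplied by a coefficient of size $O(\|v\|_{2,2})$, which Young's inequality cannot absorb without a smallness hypothesis --- this is in fact how the present paper deliberately keeps such terms linear in the dissipation, because it later assumes (\ref{GLOB_ADD}). For the \emph{local} theorem you must instead interpolate the $L^\infty$ factor, e.g.\ $\nif{D(v)}\le c\,\nd{\nab v}^{1/4}\nd{\nabt v}^{3/4}$, so that the dissipation enters with total power strictly below $1$ and the inequality $\frac{d}{dt}\Phi\le P(\Phi)$ you want actually follows. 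Granted that, your derivation of $\tskd$ from $\Phi(0)\le c\,\delta$ and the compactness of $K$ (which bounds $\mu$ away from zero on a fixed interval via (\ref{newc})--(\ref{newd})) correctly reproduces the stated dependence on $\kd,\Om,\delta,K$.
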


\section{Main result}
Now, we  formulate the main result  involving the global existence of regular solutions to system (\ref{a})-(\ref{e}).
\begin{theorem}\label{TW_GLOBAL}
Assume that  $\kd > \frac{1}{2}$. There exists a constant $C_{\Om,\kd}$, which depends only on $\Om$ and $\kd$, with the following property: for any
  $\om_{0}$, $b_{0}\in \V^{2}$,  $v_{0}\in \Vkd^{2}$, if   (\ref{f}), (\ref{g}) hold and
\eqq{
\mnit -C_{\Om, \kd} Z_0(t) > 0 ~~~~\m{ for } \hd  t \in [0, T),
}{GLOB_ADD}
for some $T\in (0,\infty]$, then there exists a unique  $(v,\om , b)\in \mathcal{X}(T)$ solution to (\ref{a})-(\ref{e}) in $\OmT$.
\end{theorem}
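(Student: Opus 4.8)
The plan is to obtain the global solution by continuing the local one from Theorem~\ref{LOCALNE_TH}, the continuation being driven by a single highest-order a priori estimate whose validity is guaranteed by hypothesis (\ref{GLOB_ADD}). Let $(v,\om,b)\in\mathcal{X}(\ts)$ be the local solution and let $\Ts\in(0,T]$ be the supremum of the times up to which it exists and remains in $\mathcal{X}$; the aim is to show $\Ts=T$. Throughout $[0,\Ts)$ the pointwise bounds (\ref{newc}) and (\ref{newd}) are available, and together they bound the diffusion coefficient $\mu=\bno$ from below: from $b\geq\bmt$ and $\om\leq\ommt$ one gets $\mu(x,t)\geq\bmt/\ommt=\mnit$, with the explicit form recorded in (\ref{mnitDef}). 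This lower bound is the quantity on the left of (\ref{GLOB_ADD}).

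The core of the argument is a differential inequality for the top-order energy
\eqq{y(t)=\ndk{\lap v}+\ndk{\lap\om}+\ndk{\lap b},\qquad y(0)=\ndk{\lap v_0}+\ndk{\lap\om_0}+\ndk{\lap b_0}.}{sketch_y}
First I would apply $\lap$ to each of (\ref{a})--(\ref{c}), test with $\lap v$, $\lap\om$ and $\lap b$ respectively, and integrate by parts over $\Om$; the pressure drops out against the divergence-free test function $\lap v$. The principal diffusion terms yield the coercive contribution
\eqq{\iOm\mu\n{|\nal v|^{2}+|\nal\om|^{2}+|\nal b|^{2}}\geq\mnit(t)\n{\ndk{\nal v}+\ndk{\nal\om}+\ndk{\nal b}}.}{sketch_coerc}
The remaining terms --- the convective terms, the variable-coefficient commutators coming from $\divv(\mu\nab\cdot)$ (whose worst members pair $\nab\mu=\nab(\bno)$ with the second and third derivatives of the unknowns), the reaction terms $\kd\om^{2}$ and $b\om$ (the first of which, through $\lap(\om^{2})$, produces a favorable sign-definite contribution that sharpens the decay rate), and the strongly nonlinear production $\mu\dvk$ --- I would estimate with H\"older's and Young's inequalities, the Sobolev embedding $H^{1}\hookrightarrow L^{6}$, and the pointwise bounds. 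After each top-order factor $\nd{\nal(\cdot)}$ is Young-absorbed into (\ref{sketch_coerc}), the surviving coefficients are controlled by norms of $\nab\mu$, which (\ref{newc}), (\ref{newd}) express through $\bmmt$, $\omt$ and $y$ itself; these are exactly the quantities gathered into $A,B,C,D$ and hence into $Z_0$ in (\ref{def_Z0}). The anticipated outcome, valid as long as $y\leq Y_2$, is
\eqq{\jd\ddt y+\n{\mnit(t)-\comkd Z_0(t)}\n{\ndk{\nal v}+\ndk{\nal\om}+\ndk{\nal b}}\leq0.}{sketch_di}

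The estimate is closed by comparison with the explicit envelope (\ref{Y_0t}). A direct integration of (\ref{mnitDef}) shows that the exponent in (\ref{Y_0t}) equals $-\int_{0}^{t}\frac{\kd}{C_{p}^{2}}\mnit(\tau)\,d\tau$, so $Y_2$ is precisely the solution of the linear decay problem $\ddt Y=-\frac{\kd}{C_{p}^{2}}\mnit(t)\,Y$ with $Y(0)=y(0)$; here $\kd>\jd$ is exactly what makes the exponent $2-1/\kd$ in (\ref{Y_0t}) positive, guaranteeing convergence. Now I run a continuity argument on the maximal subinterval of $[0,\Ts)$ on which $y\leq Y_2$: there the error bound behind (\ref{sketch_di}) is legitimate (so that $Z_0$, built from $Y_2$, really does dominate the remainder), (\ref{GLOB_ADD}) makes the bracket in (\ref{sketch_di}) positive, and the Poincar\'e inequality $\ndk{\nal(\cdot)}\geq\frac{1}{C_{p}^{2}}\ndk{\lap(\cdot)}$ turns (\ref{sketch_di}) into a closed linear differential inequality for $y$. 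With $\comkd$ enlarged if necessary, this inequality is dominated by the one satisfied by $Y_2$, so that $y<Y_2$ strictly in the interior; consequently the subinterval exhausts $[0,\Ts)$ and $y(t)\leq Y_2(t)$ holds throughout.

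Finally, the bound $y(t)\leq Y_2(t)$, the lower-order energy estimates for $\nd{\cdot}$ and $\nd{\nab\cdot}$, and the pointwise bounds (\ref{newc}), (\ref{newd}) together keep $\nsodk{v}+\nsodk{\om}+\nsodk{b}\leq\delta$ for a fixed $\delta$, and keep the extrema $\min_{x}\om(\cdot,t)$, $\max_{x}\om(\cdot,t)$, $\min_{x}b(\cdot,t)$ --- squeezed by (\ref{newc}), (\ref{newd}) and $H^{2}\hookrightarrow L^{\infty}$ --- inside a fixed compact subset of $\{(a,b,c):0<a\leq b,\ 0<c\}$, on every $[0,T']$ with $T'<T$. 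Were $\Ts<T$, Theorem~\ref{LOCALNE_TH}, specifically the uniform lower bound $\tskd$ from (\ref{uniformly}), would let us restart the solution at a time within $\tskd$ of $\Ts$ and continue it past $\Ts$ in $\mathcal{X}$, contradicting the maximality of $\Ts$; hence $\Ts=T$. Uniqueness is obtained by a Gronwall estimate for the difference of two solutions sharing the same data, within the regularity class $\mathcal{X}$. The step I expect to be the main obstacle is the top-order estimate producing (\ref{sketch_di}): taming the strongly nonlinear production $\mu\dvk$ and the commutators from $\divv(\mu\nab\cdot)$ while keeping careful track of the time-dependent coefficients, so that the remainder assembles \emph{precisely} into $\comkd Z_0(t)$ --- this is what makes the single scalar condition (\ref{GLOB_ADD}) sufficient to close everything.
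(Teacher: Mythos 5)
Your architecture is the same as the paper's: pointwise bounds for $b,\om$ below/above, a top-order energy $X_2=\ndk{\lap v}+\ndk{\lap \om}+\ndk{\lap b}$ estimated after applying $\lap$ and testing with the solution, comparison with the explicit envelope $Y_2$, continuation past the maximal time via the quantified lower bound $\tskd$ of Theorem~\ref{LOCALNE_TH} applied with $\delta$ from the uniform $H^2$ bound and $K=\{(\omt,\ommt,\bmt)\}$, and Gronwall uniqueness for differences. However, there is a genuine gap in your closure step. In your key differential inequality you allocate \emph{all} of the dissipation to the bracket, writing $\jd \ddt y + \n{\mnit - \comkd Z_0(t)} X_3 \le 0$ with $X_3$ the third-order energy. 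Hypothesis (\ref{GLOB_ADD}) guarantees only that this bracket is \emph{positive}, so after Poincar\'e the decay rate you can extract for $y$ is $\frac{2}{C_p^2}\n{\mnit - \comkd Z_0}$, which may be an arbitrarily small fraction of $\mnit$ at times where (\ref{GLOB_ADD}) holds with small margin. Comparison with $Y_2$ --- which, as you correctly compute, solves $\ddt Y = -\frac{\kd}{C_p^2}\mnit Y$ --- then fails, $y$ may cross $Y_2$, and once it does, $Z_0$ (built from $Y_2$) no longer dominates the remainder, so the bootstrap on $\{y\le Y_2\}$ does not close. Your proposed repair, ``enlarging $\comkd$,'' amounts to requiring $\n{1-\frac{\kd}{2}}\mnit \ge \comkd Z_0$; since the theorem's constant is at your disposal this is legitimate, but it only rescues the argument for $\jd<\kd<2$ and fails outright for $\kd\ge 2$, whereas the theorem is claimed for all $\kd>\jd$.

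The paper's bookkeeping avoids this entirely: the energy estimate is kept in the form (\ref{est4Sum_v3}), $\ddt X_2 + 2\mnit X_3 \le \comkd Z(t)\, X_3$, so the dissipation enters with total weight $2\mnit$, and it is then split as in (\ref{key1}): $\ddt X_2 + \n{\mnit - \comkd Z} X_3 \le -\mnit X_3 \le -\frac{\mnit}{C_p^2} X_2$. One copy of $\mnit X_3$ absorbs the nonlinear remainder using \emph{only} positivity of the bracket, and the second full copy survives to drive decay at the rate $\frac{1}{C_p^2}\mnit$, independent of the margin in (\ref{GLOB_ADD}). (The paper also phrases the continuity argument with $Z(t)$ built from the actual $X_2(t)$ and a first-touching time for $\mnit-\comkd Z(t)=0$, deducing $X_2(t^*)\le Y_2(t^*)$, hence $Z(t^*)\le Z_0(t^*)$, contradicting (\ref{GLOB_ADD}); this is equivalent in spirit to your bootstrap, but requires no rate comparison beyond the single decay estimate.) One further wrinkle that your own observation exposes: integrating $\ddt X_2 \le -\frac{1}{C_p^2}\mnit X_2$ yields the exponent $-\frac{\bmi}{C_p^2(2\kd-1)\oma^2}\n{(1+\kd\oma t)^{2-1/\kd}-1}$, \emph{without} the factor $\kd$ that appears in (\ref{Y_0t}); so the conclusion $X_2\le Y_2$ with (\ref{Y_0t}) as printed is justified only for $\kd\le 1$, and for $\kd>1$ the spurious $\kd$ in (\ref{Y_0t}) should be deleted --- a slip present in the paper itself, made visible by your identification of $Y_2$ as the solution of $\ddt Y=-\frac{\kd}{C_p^2}\mnit Y$.
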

We recall that we impose the constants $\nu_{0}$, $\kappa_{1}$, $\kappa_{3}$ and $\kappa_{4}$ are equal to one. In general case, if all these constants are positive and arbitrary, then the constant in the above result will depend on $\nu_{0}$, $\kappa_{1}$, \dots, $\kappa_{4}$ and $\Om$. The functions $\mnit$ and   $Z_{0}(t)$ were defined in (\ref{mnitDef}) and (\ref{def_Z0}), respectively.

\begin{rem}
The condition (\ref{GLOB_ADD}) involves only the initial data: $v_{0}$, $\om_{0}$, $b_{0}$,  the parameters of the system: $\nu_{0}$, $\kappa_{1}$, \dots , $\kappa_{4}$ and $\Om$.
\end{rem}

\begin{rem}
The assumption $\kd > \jd$ is crucial in the proof of   Theorem \ref{TW_GLOBAL} (and also in Proposition \ref{propEstimates}). Without it we are unable to prove the exponential decay of $L^2$-norm of $v(t)$ and polynomial decay of $L^1$-norm of $b(t)$, around which the proof is structured. %Presumed lack of decay of certain norms in case of $\kd < \jd$ would resemble Euler system (from which Kolmogorov's system is derived).
\end{rem}

\begin{rem}
As is stated in \cite{Spal}, Kolmogorov set $\kd = \frac{7}{11}$ and  Theorem \ref{TW_GLOBAL} may be applied  for this value of parameter $\kd$.
\end{rem}

\no As a consequence of theorem~\ref{TW_GLOBAL} we have
\begin{coro}
Assume that $\kd > \frac{1}{2}$, $v_{0}\in \Vkd^{2}$,  $\om_{0}$, $b_{0}\in \V^{2}$   and the conditions (\ref{f}), (\ref{g}) hold. We denote
\[
a_{0}=\sup_{t\geq 0 } 2 \comkd \jkt^{\frac{1}{\kd}-1} \left(A(t)+ B(t)Y_{2}^{\frac{1}{4}}(t) + C(t)Y_{2}^{\frac{3}{4}}(t)+ D(t)Y_{2}^{\frac{5}{4}}(t)  \right),
\]
where $\comkd $ is the constant given in theorem~\ref{TW_GLOBAL} and $Y_{2}, A(t),\dots, D(t)$ were defined in (\ref{Y_0t})-(\ref{def_D}). Then $a_{0}$ is finite. If in addition,
\eqnsl{
\frac{\bmi}{\oma} > 2 \comkd \n{ \| b_{0}\|_{1} + \jd \ndk{ v_{0}}\n{1 + I_\infty \n{\kd, \frac{\omi}{\oma}, \frac{\bmi}{(\oma)^2}}} } \\
\text{ for } \kd \geq 1
}{Z1}
and
\eqnsl{
\frac{\bmi}{\oma} > 2 \comkd \n{ \| b_{0}\|_{1} + \jd \ndk{ v_{0}}\n{1 + I_\infty \n{\kd, \frac{\omi}{\oma}, \frac{\bmi}{(\oma)^2}}} } \n{\frac{\oma}{\omi}}^\frac{1}{\kd} \\
\text{ for } \kd \in \n{\jd , 1}
}{Z1.5}
and
\eqq{\frac{\bmi}{\oma} > a_{0} \left( \ndk{\lap v_{0}}+ \ndk{\lap \om_{0}}+ \ndk{\lap b_{0}}   \right)^{\frac{1}{4}}}{Z2}
hold, then the system (\ref{a})-(\ref{e}) has a unique global solution in $\mathcal{X}(\infty)$.
\label{coro_glob}
\end{coro}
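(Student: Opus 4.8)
The plan is to use the two explicit smallness conditions (\ref{Z1}) and (\ref{Z2}) to verify the abstract hypothesis (\ref{GLOB_ADD}) of Theorem~\ref{TW_GLOBAL} on the whole half-line, i.e. with $T=\infty$, and then invoke that theorem directly. Throughout I abbreviate $s=\jkt$ and recall from (\ref{mnitDef}) that $\mnit=\frac{\bmi}{\oma}\,s^{1-\frac{1}{\kd}}$. Dividing (\ref{GLOB_ADD}) by the positive factor $s^{1-\frac{1}{\kd}}$, the inequality to be checked becomes
\[
\frac{\bmi}{\oma} > \comkd\, s^{\frac{1}{\kd}-1} Z_0(t) \qquad \text{for all } t\ge 0 .
\]

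First I would establish that $a_0$ is finite. The decisive structural fact is that $\kd>\frac12$ forces $2-\frac{1}{\kd}>0$, so that by (\ref{Y_0t}) the factor $Y_2(t)$ decays like $\exp\!\big(-c\,s^{2-1/\kd}\big)$ with $c>0$, faster than any power of $s$. On the other hand, by (\ref{def_B})--(\ref{def_D}) each of $B(t),C(t),D(t)$ is a finite sum of powers of $\frac{1}{\omt}$ and $\bmmt$; since $\frac{1}{\omt}$ is affine in $t$ (hence $O(s)$) and $\bmmt$ is bounded, these coefficients grow at most polynomially in $s$, as does the prefactor $s^{\frac{1}{\kd}-1}$. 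Consequently each of the products $s^{\frac{1}{\kd}-1}B(t)Y_2^{1/4}$, $s^{\frac{1}{\kd}-1}C(t)Y_2^{3/4}$, $s^{\frac{1}{\kd}-1}D(t)Y_2^{5/4}$ is continuous on $[0,\infty)$ and tends to $0$ as $t\to\infty$, hence is bounded; the term $s^{\frac{1}{\kd}-1}A(t)$ is treated separately, using that $A(t)$ is bounded because $\bmmt$ is dominated by its value at $t=0$. Taking the supremum then yields $a_0<\infty$. This is the step I expect to be the main obstacle, since it is where the competition between the polynomial growth coming from $\frac{1}{\omt}$ and $s^{\frac{1}{\kd}-1}$ and the super-polynomial decay of $Y_2$ must be made quantitative.

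Next I would split $Z_0(t)=\bmmt+\big(A Y_2^{1/4}+B Y_2^{1/2}+C Y_2+D Y_2^{3/2}\big)$ and bound the two pieces separately. For the first piece, (\ref{bMinMaxt}) gives $s^{\frac{1}{\kd}-1}\bmmt=\frac{\nj{b_0}+\jd\ndk{v_0}}{s}\le \nj{b_0}+\jd\ndk{v_0}$, so by (\ref{Z1}),
\[
\comkd\, s^{\frac{1}{\kd}-1}\bmmt \le \comkd\big(\nj{b_0}+\jd\ndk{v_0}\big) < \tfrac12\,\frac{\bmi}{\oma}.
\]
For the second piece, I factor $Y_2^{1/4}$ out of the bracket, obtaining exactly the combination appearing in $a_0$:
\[
A Y_2^{1/4}+B Y_2^{1/2}+C Y_2+D Y_2^{3/2}=Y_2^{1/4}\big(A+B Y_2^{1/4}+C Y_2^{3/4}+D Y_2^{5/4}\big).
\]
Since $2-\frac{1}{\kd}>0$, the exponential in (\ref{Y_0t}) is at most $1$, whence $Y_2(t)\le Y_2(0)=\ndk{\lap v_0}+\ndk{\lap \om_0}+\ndk{\lap b_0}$. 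Bounding the outer $Y_2^{1/4}(t)$ by $Y_2^{1/4}(0)$ and invoking the definition of $a_0$ gives
\[
\comkd\, s^{\frac{1}{\kd}-1}\big(A Y_2^{1/4}+B Y_2^{1/2}+C Y_2+D Y_2^{3/2}\big)\le \frac{a_0}{2}\,\big(\ndk{\lap v_0}+\ndk{\lap \om_0}+\ndk{\lap b_0}\big)^{1/4},
\]
which by (\ref{Z2}) is strictly less than $\tfrac12\,\frac{\bmi}{\oma}$.

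Adding the two bounds yields $\comkd\, s^{\frac{1}{\kd}-1}Z_0(t)<\frac{\bmi}{\oma}$ for every $t\ge0$, i.e. (\ref{GLOB_ADD}) holds with $T=\infty$. Theorem~\ref{TW_GLOBAL} then produces a unique $(v,\om,b)\in\mathcal{X}(\infty)$, which is the assertion. The only genuinely delicate point is the finiteness of $a_0$; the remainder is the bookkeeping of the powers of $s$ and the two elementary reductions above.
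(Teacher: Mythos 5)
Your proposal is, in strategy and in almost every detail, the paper's own proof: you verify (\ref{GLOB_ADD}) with $T=\infty$ by splitting $Z_0(t)$ into $\bmmt$ plus the $Y_2$-weighted bracket, absorb the first piece via (\ref{Z1}) together with $\jkt^{-1}\le 1$ and multiplication by $\jkt^{1-\frac{1}{\kd}}$, and absorb the second via $Y_2(t)\le Y_2(0)$ (valid since $2-\frac{1}{\kd}>0$) after factoring $Y_2^{\frac{1}{4}}$ out of the bracket, exactly as the definition of $a_0$ demands; then you invoke Theorem~\ref{TW_GLOBAL}. Your device of splitting $\frac{\bmi}{\oma}$ into two halves is the same as the paper's summing of its inequalities (\ref{W1}) and (\ref{W2}); the factor $2$ built into (\ref{Z1})--(\ref{Z2}) exists precisely for this purpose.

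The one place you deviate, and where your argument as written has a hole, is the finiteness of $a_0$ at the $A$-term. You assert that $\jkt^{\frac{1}{\kd}-1}A(t)$ is bounded ``because $A(t)$ is bounded,'' but for $\frac12<\kd<1$ the exponent $\frac{1}{\kd}-1$ is positive, so the prefactor $\jkt^{\frac{1}{\kd}-1}$ tends to infinity and boundedness of $A$ alone proves nothing on this range; your reasoning covers only $\kd\ge 1$. The paper argues differently here: it invokes the decay of $\bmmt$ from (\ref{bMinMaxt}) and claims $\jkt^{\frac{1}{\kd}-1}A(t)$ decays like $\jkt^{\frac{1}{2\kd}-1}$, which is negative-exponent decay for every $\kd>\frac12$. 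One should note, though, that this rate corresponds to $A(t)\sim\bmmt^{\frac12}$ and silently discards the constant term $\ndk{v_0}$ inside the definition (\ref{def_A}); taking $A$ literally with $v_0\ne 0$, the product $\jkt^{\frac{1}{\kd}-1}A(t)$ does grow when $\kd\in(\frac12,1)$, so the soft spot you leave open is in fact also a soft spot of the paper (it disappears for $\kd\ge 1$, or if one retains the exponential decay of $\nd{v(t)}$ from (\ref{noa}) instead of the crude bound $\nd{v_0}$ when forming $A$). Apart from this shared delicate point, which you yourself correctly identified as the crux, your proof coincides with the paper's.
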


\begin{rem}
The conditions (\ref{Z1})-(\ref{Z2}) involve only the initial data: $v_{0}$, $\om_{0}$, $b_{0}$,  the parameters of the system: $\nu_{0}$, $\kappa_{1}$, \dots , $\kappa_{4}$ and $\Om$.
\end{rem}

\begin{rem}
We shall show  that the conditions (\ref{Z1})-(\ref{Z2}) are satisfied on some non-empty set of initial data. We focus only on the case  $\kd\in (\jd, 1)$, because the other is simpler. It may be done in the following way: we shall determine positive $\dlj$,$\dld$,$\dlt$ such that if initial data  satisfy the bounds
\eqq{
\| b_{0}\|_{1}\leq \dlj, \hd \| v_{0}\|_{2}\leq \dld, \hd \lapy \leq \dlt,
}{delty}
then (\ref{Z1.5}) and (\ref{Z2}) will be fulfilled. We proceed the following steps

\begin{itemize}
\item
set $\omi$ and $\oma$ such that $0<\omi<\oma$ and
\[
2 \comkd |\Om| (\oma)^{1+\frac{1}{\kd}}  <(\omi)^{\frac{1}{\kd}} ,
\]
i.e.
\[
\frac{1}{\oma}> 2 \comkd |\Om| \n{\frac{\oma}{\omi}}^{\frac{1}{\kd}},
\]
\item
fix  $\bmi>0$ so, we have
\[
\frac{\bmi}{\oma} > 2 \comkd  \bmi |\Om|\n{\frac{\oma}{\omi}}^{\frac{1}{\kd}},
\]
\item
choose $\dlj>\bmi |\Om|$ such that
\[
\frac{\bmi}{\oma} > 2 \comkd  \dlj \n{\frac{\oma}{\omi}}^{\frac{1}{\kd}},
\]
\item
find $\dld>0$ such that
\[
\frac{\bmi}{\oma} > 2 \comkd \n{ \dlj + \jd \dld \n{1 + I_\infty \n{\kd, \frac{\omi}{\oma}, \frac{\bmi}{(\oma)^2}}} } \n{\frac{\oma}{\omi}}^\frac{1}{\kd} ,
\]
\item
if we define $a_{0}(\dlj, \dld, \dlt)$ similarly as  in Corollary~\ref{coro_glob}, where we replace $\| b_{0}\|_{1}$ by $\dlj$, $\| v_{0}\|_{2}$ by $\dld$ and  $\lapy$  by $\dlt$, then from (\ref{bMinMaxt}), (\ref{Y_0t}) and (\ref{def_A})-(\ref{def_C}) we deduce that $a_{0}(\dlj, \dld, \dlt)$ is increasing with respect to each $\dl_{i}$. Therefore, we can find $\dlt>0$ such that
\[
\frac{\bmi}{\oma}> a_{0}(\dlj, \dld, \dlt) \dlt^{\frac{1}{4}},
\]
\item
finally, for these positive numbers $\dlj, \dld, \dlt$  and any $b_{0}$, $\om_{0}$ and $v_{0}$ such that $\bmi\leq b_{0}$, $\omi\leq \om_{0} \leq \oma$ and (\ref{delty}) hold, the conditions  (\ref{Z1.5}) and (\ref{Z2}) are satisfied.
\end{itemize}

\end{rem}

%\red{
%\begin{rem}
%The following procedure can be applied to determine non-emptiness of initial conditions set that satisfy Corollary \ref{coro_glob}. We start with %set of smooth initial conditions that fulfil (\ref{ddod}), (\ref{f}), (\ref{g}) and we will  successively narrow it down:
%\begin{itemize}
%\item chose functions that have fixed $\bmi$ value
%\item then keep that that have small value of $\oma$ to overcome the influence of $\nj{b_0}$. This results in large value of $I_\infty$
%\item keep functions with small value of $\nd{v_0}$ to mitigate the effect of $I_\infty$
%\item in case of $\kd \in \n{\jd , 1}$ keep functions that have small $\frac{\oma}{\omi}$ ratio
%\item keep functions with small norms of laplacians (which also makes $a_0$ smaller)
%\end{itemize}
%\end{rem}
%}

\section{Proof of Theorem~\ref{TW_GLOBAL}}

We need the following auxiliary results (see also theorem 4.1 \cite{MiNa}).

\begin{prop}
Assume that  $\om_{0}$, $b_{0}\in \V^{2}$,  $v_{0}\in \Vkd^{2}$ and (\ref{f}), (\ref{g}) hold. If $T>0$ and $(v, \om, b)\in \mathcal{X}(T) $ satisfies (\ref{a})-(\ref{e}), then the following estimates
\eqq{\frac{\omi}{1+\kd \omi t} \leq \om(x,t) \leq \frac{\oma}{1+\kd \oma t},}{newcog}
\eqq{\frac{\bmi}{(1+\kd \oma t)^{\frac{1}{\kd}}}  \leq b(x,t) }{newdog}
hold for $(x,t)\in \Om^{T}$.
\label{oszac_og}
\end{prop}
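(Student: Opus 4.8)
The estimates (\ref{newcog})--(\ref{newdog}) are exactly the pointwise bounds of Theorem~\ref{LOCALNE_TH}, but now asserted for an \emph{arbitrary} solution in $\mathcal{X}(T)$ rather than only for the one produced by the local construction. The plan is a parabolic comparison argument of Stampacchia type. The three barriers $\omt$, $\ommt$, $\bmt$ defined in (\ref{omMinMaxt})--(\ref{bMinMaxt}) are spatially homogeneous, and a direct computation records the ODEs they satisfy: $\ddt\ommt=-\kd(\ommt)^2$, $\ddt\omt=-\kd(\omt)^2$, and $\ddt\bmt=-\ommt\,\bmt$. Because each barrier is constant in $x$ and $\divv v=0$, testing the weak equations against the truncated differences will annihilate the convective terms, leave the diffusion term with a favourable sign, and reduce the remaining reaction terms to a manifestly nonpositive quantity. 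The $\mathcal{X}(T)$-regularity, in particular $\om,b\in L^2_{loc}([0,T);\Vd)\cap H^1_{loc}([0,T);H^1(\Om))\hookrightarrow C([0,T);H^1(\Om))$, makes the truncations admissible $H^1$ test functions, justifies the chain rule for $\ddt\ndk{\,\cdot\,}$, and lets us read off the initial values.

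For the upper bound on $\om$, I would test (\ref{ab}) with $z=\ommp=(\om-\ommt)_+$. On the set $\{\om>\ommt\}$ one has $\ot=\ommp_{,t}-\kd(\ommt)^2$, the convective term vanishes since $\iOm v\cdot\nab(\tfrac12\ommp^2)=0$, and $(\bom\,\nab\om,\nab\ommp)=\iOm\bom|\nab\ommp|^2\ge0$. Combining with the reaction term and using $\om^2-(\ommt)^2=\ommp(\om+\ommt)$ on the contact set collapses everything to
\eqq{\jd\ddt\ndk{\ommp}+\iOm\bom|\nab\ommp|^2=-\kd\iOm(\om+\ommt)\ommp^2\leq0.}{omegaupper}
Since $(\om_0-\oma)_+=0$ by (\ref{g}), Gronwall gives $\ndk{\ommp}\equiv0$, hence $\om\leq\ommt$. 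The lower bound on $\om$ is entirely analogous, testing with $\ommm=(\om-\omt)_-$ and using $\ddt\omt=-\kd(\omt)^2$ together with $(\om_0-\omi)_-=0$; this yields $\om\geq\omt$ and in particular $\om>0$.

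For the lower bound on $b$, I would test (\ref{ac}) with $-(b-\bmt)_-$. Again the convective term drops out and the diffusion term is nonnegative. Using $\ddt\bmt=-\ommt\bmt$, the resulting identity is
\eqq{\jd\ddt\ndk{(b-\bmt)_-}+\iOm\bom|\nab(b-\bmt)_-|^2=\iOm(b\om-\ommt\bmt)(b-\bmt)_- -\iOm\bom\,\dvk\,(b-\bmt)_-.}{blower}
Two sign observations finish this step. First, the production term is nonnegative, $\bom\,\dvk\geq0$, so it has the favourable sign and the last integral is $\leq0$. Second, on the contact set $\{b<\bmt\}$ we have $0<b<\bmt$ together with the \emph{already established} bound $\om\leq\ommt$, whence $b\om\leq\bmt\ommt$ and the first integral is $\leq0$ as well. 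Thus the right-hand side of (\ref{blower}) is nonpositive, and since $(b_0-\bmi)_-=0$ by (\ref{f}) we conclude $(b-\bmt)_-\equiv0$, i.e. $b\geq\bmt>0$.

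The computations themselves are short; the genuine care is threefold. The coupling in (\ref{blower}) forces the order of the argument: the bound $b\geq\bmt$ cannot be closed until the upper bound $\om\leq\ommt$ is in hand, which is why $\om$ must be treated first. The sign of the production term $\bom\,\dvk$ is what makes only a lower barrier for $b$ available and needed, with no companion upper bound. Finally, the truncation manipulations must be justified at the level of the weak formulation, which is where the $\mathcal{X}(T)$-regularity enters: it guarantees that $\ommp$, $\ommm$ and $(b-\bmt)_-$ lie in the admissible test spaces $\V^1$, that $t\mapsto\ndk{\,\cdot\,}$ is absolutely continuous, and that the barriers remain strictly positive so that $\mu=\bom$ is well defined throughout. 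I expect this last, essentially technical, verification to be the only real obstacle.
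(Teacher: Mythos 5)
Your strategy is the paper's own: the identical truncations $\ommp$, $\ommm$, $\bbtm$, the same barrier ODEs, and the same ordering of steps (upper bound for $\om$ before the lower bound for $b$, since $b\om\le\ommt\bmt$ on the contact set needs $\om\le\ommt$). But there is a genuine gap, and it is not the routine verification you expect it to be: every sign assertion in your argument presupposes the pointwise positivity that the proposition is asserting. Discarding the diffusion term requires $\bom\ge 0$; your contact-set observation ``$0<b<\bmt$'' uses $b>0$; and in the lower bound for $\om$ the factor $\om+\omt$ multiplying $\ommm^{2}$ need not be nonnegative unless $\om$ is already known to be positive (or at least bounded, to run Gronwall). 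For an \emph{arbitrary} solution in $\mathcal{X}(T)$ --- which is the whole point of Proposition~\ref{oszac_og} as opposed to Theorem~\ref{LOCALNE_TH} --- positivity of $b$ and $\om$ is known only at $t=0$, from (\ref{f})--(\ref{g}); the $\mathcal{X}(T)$-regularity gives continuity, not positivity, for $t>0$. As written, your proof is circular.

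The paper closes this loop with a continuation argument that occupies half of its proof. Sobolev embedding gives $\om,b\in C(\overline{\Om}\times[0,T))$, so (\ref{f})--(\ref{g}) yield a $t_{1}>0$ with $\jd\bmi\le b$ and $\jd\omi\le\om\le 2\oma$ on $\Om^{t_{1}}$ (estimate (\ref{dpa})); on $(0,t_{1})$ your computations are then legitimate and give (\ref{newcog})--(\ref{newdog}) there. Next one defines $\tjs$ as the supremum of times up to which the barrier bounds hold and assumes $\tjs<T$; continuity furnishes $t_{2}>\tjs$ with $\jd\bmt\le b$ and $\jd\omt\le\om\le 2\ommt$ on $\Om^{t_{2}}$, so that $\bom$ stays uniformly positive up to $t_{2}$, and rerunning the truncation argument on $(0,t_{2})$ contradicts the definition of $\tjs$. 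You must supply this open--closed step; flagging it as ``technical'' does not discharge it, because it is the entire mechanism by which the bounds propagate from $t=0$ to all of $[0,T)$. One smaller repair: the sign bookkeeping in your $b$-inequality is off. Testing with $\bbtm$ itself gives the right-hand side $-\iOm\left(b\om-\ommt\bmt\right)\bbtm+\iOm\bom\dvk\,\bbtm$, and both terms are nonpositive on the contact set; in the identity you display, the production term $-\iOm\bom\dvk\,(b-\bmt)_{-}$ is \emph{nonnegative}, the opposite of what you claim --- an artifact of mixing the test functions $(b-\bmt)_{-}$ and $-(b-\bmt)_{-}$.
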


\begin{proof}
By assumption we have $\om, b \in L^{2}_{loc}([0,T);H^{3}(\Om))$, $\om_{,t}, b_{,t} \in L^{2}_{loc}([0,T);H^{1}(\Om))$ thus, Sobolev embedding theorem implies that $\om, b\in C(\overline{\Om}\times [0,T))$. Then, by (\ref{f}) and (\ref{g})  there exists $t_{1}\in (0,T)$ such that
\eqq{\jd \bmi \leq b(x,t), \hd \jd \omi \leq \om(x,t) \leq 2 \oma \hd \m{ for } \hd (x,t)\in \Om^{t_{1}}. }{dpa}
We denote by $f_{+}$ and $f_{-}$ the non-negative  and non-positive parts of function $f$, i.e. $f= f_{+}+f_{-}$, where $f_{+}=\max\{f,0 \}$. For $t\in (0,t_{1})$ we test the equality (\ref{ab}) by $z=\ommm$ and we obtain
\[
(\om_{,t}, \ommm)+ \n{\bom \nab \om , \nab \ommm} = - \kd (\om^{2}, \ommm),
\]
where we used the condition $\divv v =0$. Using the equality $(\omt)_{,t}= - \kd (\omt)^{2}$ we may write
\[
\jd \ddt \ndk{ \ommm} - \kd \n{ (\omt)^{2}, \ommm  }+\n{ \bom \nab \ommm, \nab \ommm  }
\]
\[
= - \kd ( \om^{2}, \ommm)
\]
for $t\in (0, t_{1})$. After applying (\ref{dpa}) we get
\[
\jd \ddt \ndk{ \ommm} \leq - \kd \n{ (\om - \omt)(\om+\omt), \ommm }
\]
\[
= - \kd \n{ \om+\omt, \bk{ \ommm} }.
\]
By Gr\"onwall inequality and (\ref{g}) we deduce that $\ommm\equiv 0$ on for $t\in (0,t_{1})$ hence
\eqq{\frac{\omi}{1+\kd \omi t} \leq \om(x,t)}{omnatj}
for $(x,t)\in \overline{\Om}\times [0,t_{1})$. Next, if we test the equation (\ref{ab}) by $z=\ommp$, then proceeding similarly we deduce that
\eqq{\om(x,t)\leq \frac{\oma}{1+\kd \oma t} }{ommnatj}
for $(x,t)\in \overline{\Om}\times [0,t_{1})$.
Now, for $t\in (0,t_{1})$ we test the equation (\ref{ac}) by $q=\bbtmm$ and we obtain
\[
(b_{,t}, \bbtmm) +\n{ \bom \nab \bbtmm, \nab \bbtmm }
\]
\[
=- (b\om , \bbtmm )+ \n{ \bom \bk{D(v)}, \bbtmm},
\]
where we used the condition $\divv v =0$. By applying (\ref{dpa}) we get
\[
(b_{,t}, \bbtmm) \leq - (b\om , \bbtmm ),
\]
i.e.
\[
\jd \ddt \ndk{ \bbtmm} -  \frac{\oma  }{\jkt }
\n{ \bmt, \bbtmm   } \leq -(b\om, \bbtmm).
\]
From (\ref{dpa}) and (\ref{ommnatj}) we get
\[
-(b\om, \bbtmm) \leq - \frac{\oma}{\jkt}(b, \bbtmm )
\]
for $t\in (0,t_{1})$ hence, we obtain
\[
\jd \ddt \ndk{ \bbtmm}\leq -\frac{\oma}{\jkt}(b-\bmt, \bbtmm ) .
\]
The right-hand side in non-positive thus, we from (\ref{f}) have
\eqq{\frac{\bmi}{\jkt^{\frac{1}{\kd}}} \leq b(x,t) }{bmtj}
for $(x,t)\in \overline{\Om}\times [0,t_{1})$.
Now, we define
\[
\tjs = \sup\{\widetilde{t}\in (0,T): (\ref{newcog}), (\ref{newdog}) \m{ \hd hold  for  } (x,t)\in \Om^{\widetilde{t}} \}.
\]
By the previous step we have $\tjs\geq t_{1}>0$. If $\tjs<T$, then by continuity of $\om, b$ and (\ref{omnatj})-(\ref{bmtj}) there exists $t_{2}\in (\tjs, T)$ such that
\[
\jd \bmt \leq b(x,t), \hd \hd \jd \omt\leq \om (x,t) \leq 2\ommt \hd \m{ for } (x,t)\in \Om^{t_{2}}.
\]
Then, we have $\frac{b(x,t)}{\om(x,t)}\geq \frac{1}{4}\frac{\bmt}{\omt}>0$ for $(x,t)\in \Om\times [0,t_{2})$ and we may repeat the argument from the first part of the proof and as a consequence we get $t_{2} \le \tjs$. This contradiction means that $\tjs=T$ and the proof is finished.
\end{proof}

%%%%%%%%%%%%%%%% od

\begin{prop}
For any $T>0$, the problem (\ref{a})-(\ref{e}) has at most one solution in $\mathcal{X}(T)$.
\label{jed}
\end{prop}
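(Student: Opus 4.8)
The plan is to run a Grönwall estimate on the difference of two solutions; the only genuinely delicate point is the strongly nonlinear production term $\bjoj\,|D(\vj)|^{2}$ in (\ref{ac}). Let $(\vj,\oj,\bj)$ and $(\vd,\od,\bd)$ be two elements of $\mathcal{X}(T)$ satisfying (\ref{aa})--(\ref{ac}) with the same initial data, and set $U=\vj-\vd$, $W=\oj-\od$, $B=\bj-\bd$. Fix a compact subinterval $[0,\tau]\subset[0,T)$. By Proposition~\ref{oszac_og} (which yields $\bjoj,\bdod\ge\mnit>0$) together with the boundedness of $b^{i}$ and $1/\om^{i}$ on $\overline{\Om}\times[0,\tau]$, both $\bjoj$ and $\bdod$ lie in a fixed interval $[m,M]$ with $0<m\le M<\infty$. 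Moreover, from $\bjoj-\bdod=\joj\,B-\bd\,\jojod\,W$ and the bounds on $\joj$ and $\bd$ we obtain the pointwise inequality $|\bjoj-\bdod|\le C(\tau)(|B|+|W|)$. Finally, from $\vj,\vd\in L^{2}_{loc}([0,T);\Vt)$, the $L^{2}_{loc}([0,T);H^{3})$–regularity of $\om^{i},b^{i}$ already exploited in the proof of Proposition~\ref{oszac_og}, and $H^{3}(\Om)\hookrightarrow W^{1,\infty}(\Om)$, all coefficient norms $\nif{\nab\vd},\nif{D(\vj)},\nif{D(\vd)},\nif{\nab\od},\nif{\nab\bd},\nif{\oj},\nif{\bj}$ appearing below are finite for a.e.\ $t$ and lie in $L^{2}(0,\tau)$ in time.

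Next I would subtract the weak formulations (\ref{aa})--(\ref{ac}) of the two solutions and test the velocity-difference equation with $w=U\in\Vkd^{1}$, the $\om$-difference with $z=W$, and the $b$-difference with $q=B$. Since $U_{,t},W_{,t},B_{,t}\in L^{2}_{loc}([0,T);L^{2}(\Om))$, the time-derivative terms produce $\jd\ddt\ndk{U}$, $\jd\ddt\ndk{W}$, $\jd\ddt\ndk{B}$. Splitting the diffusion terms as $\bjoj D(\vj)-\bdod D(\vd)=\bjoj D(U)+(\bjoj-\bdod)D(\vd)$ (and analogously with $\nab\om$, $\nab b$) yields the nonnegative dissipation $(\bjoj D(U),D(U))\ge m\,\ndk{D(U)}$, $m\,\ndk{\nab W}$, $m\,\ndk{\nab B}$, together with cross terms such as $((\bjoj-\bdod)D(\vd),D(U))$ bounded, via $|\bjoj-\bdod|\le C(|B|+|W|)$ and Young's inequality, by $\epsilon\,\ndk{D(U)}+C_{\epsilon}\nifk{D(\vd)}(\ndk{B}+\ndk{W})$. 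In the convective terms I write $\vj\otimes\vj-\vd\otimes\vd$, $\oj\vj-\od\vd=\oj U+W\vd$, $\bj\vj-\bd\vd=\bj U+B\vd$; after using $\divv\vj=\divv\vd=0$ the part transported by $\vj$ (resp.\ the $\vd$–parts paired with $\nab W$, $\nab B$) integrates to zero, and what remains is controlled by $\nif{\nab\vd}\ndk{U}$, $\nif{\oj}\nd{U}\nd{\nab W}$ and $\nif{\bj}\nd{U}\nd{\nab B}$. The right-hand side of (\ref{ab}) gives $-\kd((\oj+\od)W,W)$, which has favorable sign and is bounded by $C\ndk{W}$, while the mass term in (\ref{ac}) splits as $\bj\om^{1}-\bd\om^{2}=\bj W+\od B$, controlled by $\nif{\bj}\nd{W}\nd{B}+\nif{\od}\ndk{B}$. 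The crucial term is the production term in (\ref{ac}): using $|D(\vj)|^{2}-|D(\vd)|^{2}=\big(D(\vj)+D(\vd)\big):D(U)$ I write $\bjoj|D(\vj)|^{2}-\bdod|D(\vd)|^{2}=\bjoj\big(D(\vj)+D(\vd)\big):D(U)+(\bjoj-\bdod)|D(\vd)|^{2}$; paired with $B$, and using $\bjoj,D(\vj),D(\vd)\in L^{\infty}$ and $|\bjoj-\bdod|\le C(|B|+|W|)$, these two pieces are bounded by $C\nd{D(U)}\nd{B}$ and $C(\ndk{B}+\ndk{W})$.

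Summing the three tested equations and applying Young's inequality to every cross term carrying a factor $\nd{D(U)}$, $\nd{\nab W}$ or $\nd{\nab B}$ — with the small parameter chosen relative to $m$ so that these are absorbed into the dissipation on the left — I arrive at
\[
\ddt\Big(\ndk{U}+\ndk{W}+\ndk{B}\Big)\le g(t)\,\Big(\ndk{U}+\ndk{W}+\ndk{B}\Big),
\]
where $g\in L^{1}(0,\tau)$, being a finite sum of squares (or products with bounded factors) of the coefficient norms listed above, each of which lies in $L^{2}(0,\tau)$. Since the two solutions share the same initial data, the quantity in parentheses vanishes at $t=0$, so Grönwall's inequality forces $U\equiv W\equiv B\equiv0$ on $[0,\tau]$. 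As $\tau<T$ was arbitrary, the two solutions coincide on $\Om^{T}$.

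The main obstacle is precisely the production term $\bjoj|D(\vj)|^{2}$: its difference unavoidably contains the factor $D(U)=D(\vj)-D(\vd)$ tested against $B$, and the only mechanism available to control $\nd{D(U)}$ is the dissipation $m\,\ndk{D(U)}$ generated by the principal part of the velocity equation. Consequently the three energy estimates cannot be closed separately — they must be added and the constants tracked jointly — and one must verify that the coefficient multiplying $\nd{D(U)}\nd{B}$ is integrable in time. This is exactly where the uniform positivity $\bjoj,\bdod\ge\mnit>0$ from Proposition~\ref{oszac_og} and the $L^{2}_{loc}([0,T);H^{3})$–regularity of the solutions (needed also to put $\nab\od,\nab\bd$ in $L^{2}_{t}L^{\infty}_{x}$ for the diffusion cross terms) enter decisively.
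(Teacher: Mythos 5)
Your proposal is correct and follows essentially the same route as the paper: testing the difference equations by $U$, $W$, $B$, using the lower bound $\bjoj,\bdod\ge\mnit$ from Proposition~\ref{oszac_og}, the splitting $|D(\vj)|^{2}-|D(\vd)|^{2}=\bigl(D(\vj)+D(\vd)\bigr):D(U)$ together with $\bjoj-\bdod=\joj B-\bd\,\jojod\,W$ for the production term, absorbing the resulting $\nd{D(U)}$ factors into the velocity dissipation by summing the three estimates, and closing with Gr\"onwall via an $L^{1}$-in-time coefficient coming from the $L^{\infty}(0,T;H^{2})\cap L^{2}(0,T;H^{3})$ regularity. Your identification of the production term as the obstruction forcing the three energy inequalities to be summed jointly is exactly the mechanism in the paper's proof (where the term $\mnit\ndk{D(v)}$ appears on the right of the $b$-estimate and cancels against the velocity dissipation).
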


\begin{proof}
Suppose that $(\vj,\oj,\bj )$, $(\vd, \od , \bd)\in \mathcal{X}(T)$ satisfy (\ref{a})-(\ref{e}) in $\OmT$. We denote $v=\vj - \vd $, $\om = \oj - \od$, $b= \bj - \bd$ and we test the equations for $\vj$ and $\vd$ by $v$. After subtracting the equations for $v^{i}$ we get
\[
(v_{,t}, v )- \n{ \vj \otimes \vj  - \vd \otimes \vd , \nab v  }+ \n{ \bjoj D(\vj) - \bdod D(\vd), D(v)    }=0.
\]
We note that
\[
\n{ \bjoj D(\vj) - \bdod D(\vd), D(v)    }
\]
\[
= \n{\bjoj D(v), D(v)  }+ \n{\boj, D(\vd), D(v)  }-\n{ \frac{b^{2} \om }{\oj \od }  D(v^{2}), D(v)  },
\]
\[
\n{ \vj \otimes \vj  - \vd \otimes \vd , \nab v  } = \n{\vj \otimes v, \nab v   } + \n{ v\otimes \vd , \nab v  }.
\]
By proposition~\ref{oszac_og} we have $\bjoj\geq \mnit$ thus, by H\"older inequality we get
\[
\jd \ddt \ndk{v}+ \mnit \ndk{ D(v)} \leq \nif{\joj}\nd{ b} \nif{ D(\vd)} \nd{ D(v)}
\]
\[
+ \nif{ \jojod  } \nif{ \bd} \nd{\om} \nif{D(\vd)} \nd{D(v)} + \nif{ \vj} \nd{v} \nd{ \nab v } + \nd{v} \nif{ \vd} \nd{ \nab v }.
\]
By proposition~\ref{oszac_og} functions $\oj$ and $\od$ are estimated from below by $\omt$ hence, if we apply Young inequality, Sobolev embedding theorem and $\nd{D(v)} = \frac{\sqrt{2}}{2} \nd{\nabla v}$, then we obtain
\[
\ddt \ndk{v}+ \mnit \ndk{ D(v)}
\]
\eqq{ \leq \frac{C}{\mnit } \n{ (\omt)^{-2} \nsotk{\vd} \ndk{ b}+ (\omt)^{-4} \nsodk{\bd}\nsotk{\vd} \ndk{\om} + \n{\nsodk{\vj}+\nsodk{ \vd} }\ndk{v}  },    }{jed_a}
where $C$ depends only on $\Om$. Now, we test the equations for $\oj$ and $\od$ by $\om = \oj- \od$ and as a result we obtain
\[
\jd \ddt \ndk{ \om }+  \n{  \bjoj \nab \om, \nab \om  } = \n{ \oj v, \nab \om    }+ \n{ \om \vd , \nab \om   }
\]
\[
-\n{ \boj \nab \od, \nab \om } + \n{  \frac{\bd \om}{\oj \od} \nab \od, \nab \om  } - \kd \n{  \om (\oj+ \od), \om   }.
\]
From H\"older inequality and (\ref{newcog}) we get
\[
\jd \ddt \ndk{ \om} + \mnit \ndk{ \nab \om }
\]
\[
 \leq  \nif{ \oj } \nd{ v} \nd{ \nab \om} + \nd{  \om } \nif{\vd} \nd{ \nab \om} + \nif{\joj} \nd{ b} \nif{\nab \od} \nd{ \nab \om}
\]
\[
 + \nif{ \jojod } \nif{ \bd} \nd{\om} \nif{ \nab \od} \nd{ \nab \om} + \kd \nif{ \oj + \od } \ndk{\om}.
\]
By Young inequality and Sobolev embedding theorem we obtain
\[
\ddt \ndk{ \om} + \mnit \ndk{ \nab \om } \leq \frac{C}{\mnit} \Big( \nsodk{ \oj} \ndk{ v}
\]
\eqnsl{ + \n{ \nsodk{ \vd } + (\omt)^{-4} \nsodk{\bd} \nsotk{ \od} + \mnit\nsodk{\oj} + \mnit\nsod{\od}    }\ndk{ \om} \\ + (\omt)^{-2} \nsotk{\od} \ndk{b}   \Big),
}{jed_b}
where $C$ depends only on $\Om$ and $\kd$. Finally, we test the equations for $\bj$ and $\bd$ by $b= \bj- \bd$ and we get
\[
\jd \ddt \ndk{ b} + \n{ \bjoj \nab b, \nab b }= \n{\bj v, \nab b  }+ \n{ b \vd , \nab b} -\n{ \boj \nab \bd, \nab b } + \n{  \frac{\bd \om}{\oj \od} \nab \bd, \nab b  }
\]
\[
-\n{\bj \om , \nab b   } - \n{ b \od, \nab b  } + \n{ \bjoj \bk{D(\vj) } -\bdod \bk{ D(\vd)} , b   }.
\]
We note that the last term on the right-hand side is equal to
\[
\n{ \bjoj D(v)  D(\vj + \vd), b  } + \n{ \boj \bk{ D(\vd)} , b  } - \n{ \frac{ \bd \om }{\oj \od} \bk{ D(\vd)}, b }.
\]
From H\"older inequality and (\ref{newcog}), (\ref{newdog}) we get
\eqns{\jd \ddt \ndk{ b} + & \mnit \ndk{ \nab b } \leq \nif{ \bj} \nd{ v} \nd{ \nab b } + \nd{ b} \nif{ \vd} \nd{\nab b } \\
& + \nif{ \joj} \nd{ b} \nif{\nab \bd} \nd{ \nab b} + \nif{\jojod} \nif{ \bd } \nd{ \om} \nif{\nab \bd} \nd{\nab b } \\
& + \nif{ \bj} \nd{ \om }\nd{ \nab b } + \nd{b} \nif{ \od} \nd{\nab b } \\
& + \nif{ \joj } \nif{ \bj } \nd{ D(v) } \nif{ D(\vj + \vd)} \nd{b} \\
& + \nif{ \joj} \ndk{ b} \nifk{D(\vd)} +\nif{ \jojod} \nif{\bd} \nd{\om} \nifk{D(\vd)} \nd{b}.
 }
Applying Young inequality and Sobolev embedding theorem we obtain
\eqnsl{\ddt \ndk{ b} +   \mnit \ndk{\nab b } & \leq  \frac{C}{\mnit} \Big\{ \nsodk{\bj }\ndk{v} \\
& + \Big[ \nsodk{\vd} + (\omt)^{-2} \nsotk{\bd}+ \nsodk{ \od} \\
& +(\omt)^{-2}  (\mnit + \nsodk{ \bj})( \nsotk{ \vj }+ \nsotk{ \vd}) \\
& + \mnit(\omt)^{-1} \nsotk{ \vd}  \Big] \ndk{ b} \\
& +\Big[ (\omt)^{-4}\nsodk{ \bd}   \nsotk{ \bd} +\nsodk{b^{1}} \\
& + \mnit(\omt)^{-2} \nsodk{b^{2}} \nsotk{ \vd} \Big] \ndk{ \om}   \Big\} + \mnit \ndk{ D(v)}.
}{jed_c}
If we sum the inequalities (\ref{jed_a})-(\ref{jed_c}), then we obtain
\[
\ddt \n{ \ndk{v}+ \ndk{\om} + \ndk{ b}   } \leq h(t) \n{ \ndk{v}+ \ndk{\om} + \ndk{ b}   },
\]
with $h\in L^{1}(0,T)$, because after applying the embedding $\mathcal{X}(T)  \hookrightarrow L^{\infty}(0,T;H^{2}(\Om))  $ we deduce that
$(v^{i}, \om^{i}, b^{i})$ belong to $L^{\infty}(0,T;H^{2}(\Om))\cap L^{2}(0,T;H^{3}(\Om))$ (the embedding is just a consequence of one integration by parts in the term $\ddt \| \Delta u \|_{2}^{2}$).  By the assumption, $v(0)=0$, $\om(0)=0$, $b(0)=0$ thus, by Gr\"onwall inequality we get $v\equiv 0$, $\om \equiv 0$ and $b\equiv 0$ on $\Om^{T}$ and the proof is finished.
\end{proof}

%%%%%%%%%%%%%%%%%%%%%%%%%%% do

Suppose that the assumptions of theorem~\ref{TW_GLOBAL} hold. Then, by theorem \ref{LOCALNE_TH} there exists regular, local in time solution to the system (\ref{a})-(\ref{e}), which belongs to $\mathcal{X}(T_{0})$ for some positive $T_{0}$. From Proposition~\ref{jed} it is unique solution in  $\mathcal{X}(T_{0})$. We will show that provided the  smallness condition imposed on initial data (formulated in  (\ref{GLOB_ADD})), the solution exists on $[0,T)$. In particular, if (\ref{GLOB_ADD}) holds with  $T=\infty$, then the solution is global, i.e. it belongs to $\mathcal{X}(\infty)$. Firstly, we denote
\eqq{
T^{*}=\sup\{t^{*}>0: \hd \m{ system (\ref{a})-(\ref{e}) has a solution  } (v,\om , b) \m{ \hd in } \mathcal{X}(t^{*}) \}.
}{defTgw}
We note that  $T^{*}\geq T_{0}>0$. By Proposition~\ref{jed} there exists  $(v,\om, b)$  the  unique solution of (\ref{a})-(\ref{e}) in $\mathcal{X}(T^{*})$, i.e. the following identities
\eqq{(v_{,t}, w) -  (v\otimes v,\nabla w)  +     \n{ \frac{b}{\om}  D(v), D(w)} = 0 \hd \m{ for } \hd w\in \Vkdj}{gbh}
\eqq{ (\om_{,t}, z) - (\om v, \nabla z  ) + \n{ \frac{b}{\om} \nabla \om, \nabla z  } = - \kd (\om^{2}, z ) \hd \m{ for } \hd z\in \Vj,  }{cbgh}
\eqq{(b_{,t}, q) - (b v, \nab q   )   +\n{  \frac{b}{\om}  \nabla b, \nabla q  }  = - (b \om , q) + \left( \frac{b}{\om} \bk{D(v)}, q\right) \hd \m{ for } \hd q\in \Vj ,  }{ccgno}
hold for a.a. $t\in (0,\Ts)$, where $(\cdot,\cdot)$ denotes inner product in $L^{2}(\Om)$. By Proposition~~\ref{oszac_og} functions $\om$ and $b$ satisfy
\eqnsl{
b(t,x)\geq \bmt,  \hd \hd
\om(t,x) \geq \omt, \hd \hd
\om(t,x) \le \ommt \hd \hd  \m{for } \hd (x,t)\in \Om^{\Ts}.
}{inf-EST-b-omega}

We shall show that if the condition (\ref{GLOB_ADD}) holds for some $T$, then $T^{*}\geq T$. As it will be explained in the proof of Corollary~\ref{coro_glob}, the condition  (\ref{GLOB_ADD}) holds, provided the initial data are sufficiently small.

To prove the result we suppose that $T^{*}<T$ and we shall show that it leads to a contradiction. The idea of the proof is as follows: we shall show that under smallness assumption $(\ref{GLOB_ADD})$ we are able to obtain an estimate for solution in $H^{2}(\Om)$ norm, which is uniform with respect to $t\in [0,\Ts)$. Next, by applying Theorem~\ref{LOCALNE_TH} and Proposition~\ref{oszac_og} we will be able to extend the solution beyond $\Ts$ and this is a contradiction with the definition of $\Ts$. Therefore, the key step in the proof is to get the estimates in $H^{2}$  norm  for solution $(v,\om,b)$. First we deal with the lower order terms.

\subsection{The lower order estimates}

In this subsection we estimate  the $L^2$-norm of $v$ and next, the $L^{1}$-norm of $b$. The proof of the main theorem depends heavily on the decay  estimates  of these quantities. In the proposition below we consider all values of $\kd\in (0,\infty)$ to illustrate the influence of $\kd$ for the available decay estimates. From this we will see that $\kd=\jd$ seems to be critical value.

\begin{prop} \label{propEstimates}
For each $t\in [0,\Ts)$ the following estimates holds
\begin{enumerate}[a)]
\item
\eqnsl{
\nd{ v(t) } \le \nd{ v_0 } \exp \n{ - \frac{1}{C^{2}_{p}}\frac{\bmi }{ \oma^{2} \n{2 \kd - 1 }}  \n{\n{1 + \kd \oma t }^{2 - \frac{1}{\kd}}-1} } \\
\text{for } \kd \in \n{0,\jd} \cup \n{\jd, \infty} ,
}{noa}
and
\eqnsl{\nd{ v(t)} \leq  \nd{ v_0 }(1+\kd \oma t )^{-\frac{\bmi}{C_{p}^2\oma^2 \kd} } \hd \m{ for } \hd \kd =\frac{1}{2},}{estivk2jd}
\item
\eqq{
\nd{\om(t)} \leq \nd{\om_{0}} \hd
\text{for } \kd \in \n{0,\infty},
}{ldomega}
\item
\eqnsl{
\nj{ b(t)}&  +   \jd \ndk{v(t)} \\
& \le \frac{\nj{ b_0 } + \jd \ndk{ v_0 }\n{1+I_\infty\n{\kd, \frac{\omi}{\oma}, \frac{\bmi}{\n{\oma}^2}}}}{ \n{1 + \kd \omi t}^{\frac{1}{\kd}}}
\hd
\text{for } \kd \in \n{\jd, \infty},
}{b-l1-est}
\item
\eqnsl{
   \frac{\nj{ b_0 } + \jd \ndk{ v_0 }}{ \n{1 + \kd \oma t}^{\frac{1}{\kd}}} \leq \nj{ b(t) } + \jd \ndk{v(t)}
\hd
\text{for } \kd \in \n{0, \infty},
}{kinEnergyEstFromBelow}
\item
\eqnsl{
\nj{b} + \ndk{ v(t)}
\le \frac{\nj{b_0} + \ndk{ v_0}}{\n{1 + \kd \omi t}^{\frac{1}{\kd} \min \left\{1,\frac{C_p^2 \oma^2}{\bmi} \right\}}}
\text{\hd for } \kd \in \n{\jd, \infty},
}{b-l1-est2}
\item
\eqnsl{
\nj{b} + \jd \ndk{ v(t)}
\le \nj{b_0} + \jd \ndk{ v_0}
\text{ \hd for } \kd \in \n{0, \infty},
}{b-l1-est3}
\end{enumerate}
where $I_\infty $ was defined in (\ref{IinfDef}), hold.
\end{prop}

%\red{
%\begin{rem}
%Inequalities (\ref{b-l1-est}), (\ref{b-l1-est2}), (\ref{b-l1-est3}), (\ref{kinEnergyEstFromBelow}) in Proposition \ref{propEstimates} inform %about maximal and minimal kinetic energy possible in the system for given time. Inequalities also suggests what factors may influence energy %dissipation rate.
%\end{rem}
%\begin{rem}
%Constant $I_\infty$ occurring in (\ref{b-l1-est}) in Proposition \ref{propEstimates} may be very large. Inequality (\ref{b-l1-est2}) provides in %some cases better equivalent of $I_\infty$, however at cost of smaller decay rate.
%\end{rem}
%}

\begin{proof}[Proof of Proposition \ref{propEstimates}]
{\bf a)}
We test the equation (\ref{gbh}) by $v$ and  we get
\eqnsl{
\jd  \ddt \ndk{ v } + \n{\frac{b}{\omega} D(v), D(v)}=0 \hd \m{ for }\hd t\in (0,\Ts),
}{temp-v-vmean-lower-eq}
where we applied the condition $\divv{v}=0$. Using the notation (\ref{mnitDef}) and  the estimate (\ref{inf-EST-b-omega}) we obtain
\eqns{
\jd  \ddt \ndk{ v } + \mnit \ndk{D(v)} \le 0  \hd \m{ for }\hd t\in (0,\Ts).
}
The mean value of components of $v$ are zero thus, from the Poincar\'e inequality and the fact that $\nd{D(v)} = \frac{\sqrt{2}}{2} \nd{\nabla v}$ we get
\[
\jd  \ddt \ndk{ v } + \mnit \frac{1}{C^{2}_{p}} \ndk{v } \le 0  \hd \m{ for }\hd t\in (0,\Ts)      .
\]
By applying (\ref{mnitDef}) we may write explicitly
\eqnsl{
\ddt \ndk{ v(t) } +  \frac{2}{C^{2}_{p}}  \frac{\bmi}{ \oma} \n{1 + \kd \oma t }^{1 - \frac{1}{\kd}}\ndk{ v(t)}\leq 0  \hd \m{ for }\hd t\in (0,\Ts) .
}{v-l2-norm-ineq}
Multiplying by appropriate exponential function, after integration we obtain (\ref{noa}) and (\ref{estivk2jd}).

{\bf b)}  If we test the equation (\ref{cbgh}) by $z=\om$, then after integration by parts and using (\ref{inf-EST-b-omega}) we get
\[
\jd \ddt \ndk{ \om(t)} \leq 0 \hd \m{ for } t\in (0,\Ts)
\]
thus, we have (\ref{ldomega}).

{\bf c)}
We now proceed to estimate for $b$. We can not obtain any pointwise estimate from above for $b$. However, we are able to estimate the $L^1$-norm of $b$.  Indeed, we test  the  equation (\ref{ccgno}) by $q\equiv 1$ and we get
\eqns{
\n{ \bt,  1 }  = - \n{ b \om, 1 }
+  \n{\frac{b}{\omega} \dvk, 1 }
}
The positivity of $b$ follows from (\ref{f}), (\ref{bMinMaxt}) and (\ref{inf-EST-b-omega}) so,  we get
\[
\ddt \nj{ b }  = - \n{ b \om, 1 }
+  \n{\frac{b}{\omega} \dvk, 1 }.
\]
We note that the term $\n{\frac{b}{\omega} \dvk, 1 } $ is equal to $\n{\frac{b}{\omega} D(v), D(v)}$ thus, we can use the equation (\ref{temp-v-vmean-lower-eq}) and we obtain
\eqnsl{
\ddt \nj{ b }  = - \n{ b \om, 1 }
- \jd \ddt  \ndk{v}.
}{b-l1-norm-eq}
From (\ref{omMinMaxt}) and (\ref{inf-EST-b-omega}) we may estimate  $\omega$ from below and  we obtain
\eqnsl{
\ddt \nj{ b }  \le - \omitt \nj{ b} - \jd \ddt \ndk{ v }.
}{b-l1-norm-ineq}
By multiplying both sides by $e^{\int_0^t \omittau d \tau}$ we get
\eqns{
\ddt \n{ \nj{ b } e^{\int_0^t \omittau d \tau} }
\le
- \jd \ddt \ndk{ v } e^{\int_0^t \omittau d \tau}.
}
After integrating from $0$ to $t$ we get
\eqns{
 \nj{ b } e^{\int_0^t \omittau d \tau}
\le
\nj{ b_0 } - \jd \int_0^t \frac{d}{d \tau} \ndk{ v(\tau) } e^{\int_0^\tau \omits ds} d\tau.
}
After integrating by parts we get
\eqns{
 \nj{ b } e^{\int_0^t \omittau d \tau}
& \le
\nj{ b_0 } -  \left[\jd \ndk{ v(\tau) } e^{\int_0^\tau \omits ds} \right]_{\tau = 0}^{\tau = t}   \\
&  + \jd \int_0^t  \ndk{ v(\tau) } e^{\int_0^\tau \omits ds} \omittau d\tau.
}
Thus we get
\eqns{
 \nj{ b }  + \jd \ndk{v}
& \le
\n{ \nj{ b_0 } + \jd \ndk{ v_0 } } e^{-\int_0^t \omittau d \tau}  \\
&  + \jd e^{-\int_0^t \omittau d \tau} \int_0^t  \ndk{ v(\tau) } e^{\int_0^\tau \omits ds} \omittau d\tau.
}
We note that
\eqns{
\int_0^t \omittau d \tau = \ln \n{1 + \kd \omi t}^{\frac{1}{\kd}}
}
thus, we obtain
\eqns{
 \nj{ b }   + \jd \ndk{v}
& \le
\frac{ \nj{ b_0 } + \jd \ndk{ v_0 } }{\n{1 + \kd \omi t}^{\frac{1}{\kd}}} \\
&  + \jd \frac{1}{\n{1 + \kd \omi t}^{\frac{1}{\kd}}} \int_0^t  \ndk{ v(\tau) } \frac{\omi}{\n{1 + \kd \omi \tau}^{1 - \frac{1}{\kd}}} d\tau.
}
After using (\ref{noa}) we get
\eqns{
 \nj{ b }   + \jd \ndk{v}
& \le
\frac{ \nj{ b_0 } + \jd \ndk{ v_0 } }{\n{1 + \kd \omi t}^{\frac{1}{\kd}}}
 +  \frac{\jd \ndk{v_0} }{\n{1 + \kd \omi t}^{\frac{1}{\kd}}} I_t(\kd, \omi, \oma, \bmi),
}
where
\eqnsl{
I_t(\kd , &\omi, \oma, \bmi) \\
& = \int_0^t
\exp \n{- \frac{2\bmi \n{\n{1 + \kd \oma \tau }^{2 - \frac{1}{\kd}}-1} }{C^{2}_{p} \oma^{2} \n{2 \kd - 1 }}   }
\frac{\omi}{\n{1 + \kd \omi \tau}^{1 - \frac{1}{\kd}}}
d \tau.
}{ItEst:1}
Now, we  shall obtain  an estimate of $I_t$. Depending on the value of $\kd$, we obtain different types  of the estimates. Firstly, we focus  on the  case $\kd \geq 1$. From  (\ref{newcog}) we have
\eqns{
\frac{\omi}{\n{1 + \kd \omi \tau}^{1 - \frac{1}{\kd}}}
& =
\frac{\n{\omi}^{\frac{1}{\kd}} \n{\omi}^{1 - \frac{1}{\kd}}}{\n{1 + \kd \omi \tau}^{1 - \frac{1}{\kd}}}
 \le
 \frac{\n{\omi}^{\frac{1}{\kd}} \n{\oma}^{1 - \frac{1}{\kd}}}{\n{1 + \kd \oma \tau}^{1 - \frac{1}{\kd}}}
}
and thus
\eqnsl{
& I_t(\kd , \omi, \oma, \bmi) \\
& \le \oma \n{\frac{\omi}{\oma}}^{\frac{1}{\kd}} \int_0^t
\exp \n{- \frac{2\bmi \n{\n{1 + \kd \oma \tau }^{2 - \frac{1}{\kd}}-1} }{C^{2}_{p} \oma^{2} \n{2 \kd - 1 }}   }
\frac{d \tau}{\n{1 + \kd \oma \tau}^{1 - \frac{1}{\kd}}}
.
}{ItEst1}
Now, we can change variables $s = 1 + \kd \oma t$ and we have
\eqns{
& I_t(\kd , \omi, \oma, \bmi) \\
& \le
\frac{1}{\kd}\n{\frac{\omi}{\oma}}^{\frac{1}{\kd}}
\exp \n{\frac{2\bmi}{C^{2}_{p} \oma^{2} \n{2 \kd - 1 }}   }
\int_1^\infty
\exp \n{- \frac{2\bmi s^{2 - \frac{1}{\kd}} }{C^{2}_{p} \oma^{2} \n{2 \kd - 1 }}   }
s^{\frac{1}{\kd}-1}
ds.
}
Next, the change of variables $y=\frac{2\bmi s^{2 - \frac{1}{\kd}} }{C^{2}_{p} \oma^{2} \n{2 \kd - 1 }}  $ leads to the estimate
\eqns{
& I_t(\kd , \omi, \oma, \bmi) \\
& \le
%\frac{1}{2\kd - 1}
\n{\frac{\omi}{\oma}}^{\frac{1}{\kd}}
\exp \n{\frac{2\bmi}{C^{2}_{p} \oma^{2} \n{2 \kd - 1 }}   }
\n{\frac{C_p^2 (\oma)^2 (2\kd-1)}{2\bmi}}^{\frac{2\kd}{2\kd -1}}
\Gamma\left(\frac{1}{2\kd-1}\right).
}
Thus, in the case of $\kd \geq 1$ we  obtain
\eqns{
& \nj{ b }     + \jd \ndk{v}
 \\
& \le
\frac{ \nj{ b_0 } + \jd \ndk{ v_0 }
\n{
1 +
\Gamma\n{\frac{2\kd}{2\kd - 1}}
\n{\frac{\omi}{\oma}}^{\frac{1}{\kd}}
\n{\frac{C_p^2 (\oma)^2(2\kd -1)}{2\bmi}
\exp \n{\frac{2\bmi}{C^{2}_{p} \oma^{2}}   }
}^{\frac{1}{2\kd -1}}
}
}{\n{1 + \kd \omi t}^{\frac{1}{\kd}}}
}
hence, (\ref{b-l1-est}) holds for $\kd \geq 1$. Now, if we assume that
 $\kd \in \n{\jd,1}$, then  we have
\eqns{
\frac{1}{\n{1 + \kd \omi \tau}^{1 - \frac{1}{\kd}}}
\le
\frac{1}{\n{1 + \kd \oma \tau}^{1 - \frac{1}{\kd}}}
}
and  from (\ref{ItEst:1}) we obtain
\eqnsl{
I_t(\kd , &\omi, \oma, \bmi)
\\ & \le
\omi \int_0^t
\exp \n{- \frac{2\bmi \n{\n{1 + \kd \oma \tau }^{2 - \frac{1}{\kd}}-1} }{C^{2}_{p} \oma^{2} \n{2 \kd - 1 }}   }
\frac{d \tau}{\n{1 + \kd \oma \tau}^{1 - \frac{1}{\kd}}}.
 }{ItEst2}
Proceeding as earlier   we obtain
\eqns{
& \nj{ b }    + \jd \ndk{v} \\
& \le
\frac{ \nj{ b_0 } + \jd \ndk{ v_0 }
\n{
1 +
\Gamma\n{\frac{2\kd}{2\kd - 1}}
\frac{\omi}{\oma}
\n{\frac{C_p^2 (\oma)^2(2\kd-1)}{2\bmi}
\exp \n{\frac{2\bmi}{C^{2}_{p} \oma^{2}}   }
}^{\frac{1}{2\kd -1}}
}
}{\n{1 + \kd \omi t}^{\frac{1}{\kd}}},
}
hence, (\ref{b-l1-est}) also holds for $\kd \in \n{\jd , 1}$.

{\bf d) } Now, we shall obtain (\ref{kinEnergyEstFromBelow}) - the estimate from below. Firstly, we note that from (\ref{newcog})  and (\ref{b-l1-norm-eq}) we have
\eqns{
\ddt \n{\nj{ b} + \jd \ndk{ v }}  \geq - \omat \nj{ b}
}
hence, we get
\eqns{
\ddt \ln \n{\nj{ b} + \jd \ndk{ v }}  \geq - \omat.
}
After integration both sides from $0$ to $t$ we obtain
\eqns{
\ln \n{ \frac{\nj{ b} + \jd \ndk{ v }}{\nj{ b_0 } + \jd \ndk{ v_0 }}}
\geq
-\frac{1}{\kd} \ln \n{1 + \kd \oma t}
}
so, the inequality  (\ref{kinEnergyEstFromBelow}) is proved.

{\bf e) } Now, we shall prove (\ref{b-l1-est2}). From   (\ref{v-l2-norm-ineq}) and (\ref{b-l1-norm-ineq}) we have
\eqnsl{
\ddt \n{ \nj{b} + \ndk{ v } }
+ \omitt \nj{b}
+  \frac{1}{C^{2}_{p}}  \frac{\bmi}{ \oma} \n{1 + \kd \oma t }^{1 - \frac{1}{\kd}}\ndk{ v}
\leq 0.}{tmp::1}
We shall show   that  for $C_0 = \frac{C_p^2 \oma^2}{\bmi}$ and $t\geq 0$ there holds
\eqnsl{
\omitt \le
\frac{C_0}{C^{2}_{p}}  \frac{\bmi}{ \oma} \n{1 + \kd \oma t }^{1 - \frac{1}{\kd}}.
}{C0InEq0}
Indeed, it  is equivalent to
\eqnsl{
1 \le
\frac{C_0}{C^{2}_{p}}  \frac{\bmi}{\omi \oma} \n{1 + \kd \omi t } \n{1 + \kd \oma t }^{1 - \frac{1}{\kd}}
}{C0InEq}
and  for $\kd \geq 1$ the  right-hand side is increasing function of $t$, so it is enough to check (\ref{C0InEq}) for $t=0$, which is obviously true. If $\kd \in \n{\jd,1}$, then $\frac{1}{\kd}-1$ and $2-\frac{1}{\kd}$ are positive and the function    $\frac{1 + \kd \omi t}{1 + \kd \oma t }$ is monotonically decreasing, where  $\lim\limits_{t \rightarrow \infty} \frac{1 + \kd \omi t}{1 + \kd \oma t } = \frac{\omi}{\oma}$ thus, we have
\eqns{
\frac{C_0}{C^{2}_{p}}
\frac{\bmi}{\omi \oma} &
\n{1 + \kd \omi t }^{2 - \frac{1}{\kd}} \n{\frac{1 + \kd \omi t}{1 + \kd \oma t }}^{\frac{1}{\kd} - 1} \\
& \geq \frac{C_0}{C^{2}_{p}}
\frac{\bmi}{\omi \oma} \n{ \frac{\omi}{\oma}}^{\frac{1}{\kd}-1} \geq \frac{C_0}{C^{2}_{p}}\frac{\bmi}{\omi \oma}  \frac{\omi}{\oma}=1,
}
where in the last inequality we applied $\frac{1}{\kd}-1<  1$. Hence, (\ref{C0InEq0}) is proved for $\kd>\frac{1}{2}$.

Next, applying  (\ref{C0InEq0}) in  (\ref{tmp::1}) we deduce that
\eqns{
\ddt \n{ \nj{b} + \ndk{ v } }
+ \min \left\{1,\frac{1}{C_0} \right\} \omitt \n{ \nj{b} + \ndk{ v} }
\leq 0.
}
After integration from $0$ to $t$ we get
\eqns{
\ln \n{\frac{ \nj{b} + \ndk{ v } }{ \nj{b_0} + \ndk{ v_0 }}}
\le
- \frac{1}{\kd} \min \left\{1,\frac{1}{C_0} \right\} \ln \n{1+\kd \omi t},
}
which gives  (\ref{b-l1-est2}).

{\bf f) } The estimate  (\ref{b-l1-est3}) is a direct consequence of (\ref{b-l1-norm-ineq}).

\end{proof}
%We proved the estimates (\ref{noa}) and (\ref{b-l1-est})  for a.a. $t\in (0,\Ts)$. However, the regularity of solution implies that $t\mapsto %\nd{v(t)}$ is continuous on $[0,\Ts)$ hence, these

\subsection{Higher order estimates}
In this section we will obtain estimates for $\nd{\lap v(t)}$, $\nd{\lap \om(t)}$ and $\nd{\lap b(t)}$. Having these estimates and results of the previous section we will be able to control the $H^{2}$ norm. From (\ref{gbh})-(\ref{ccgno}) we get
\eqq{(v_{,t}, \lapk w) -  (v\otimes v,\nabla \lapk w)  +     \n{ \frac{b}{\om}  D(v), D(\lapk w)} = 0,}{gbha}
\eqq{ (\om_{,t}, \lapk z) - (\om v, \nabla \lapk z  ) + \n{ \frac{b}{\om} \nabla \om, \nabla \lapk z  } = - \kd (\om^{2}, \lapk z ),  }{cbgha}
\eqq{(b_{,t}, \lapk q) - (b v, \nab \lapk q   )   +\n{  \frac{b}{\om}  \nabla b, \nabla \lapk q  }  = - (b \om , \lapk q) + \left( \frac{b}{\om} \bk{D(v)}, \lapk q\right) ,  }{ccgnoa}
for a.a. $t\in (0, \Ts)$,  where the test functions are such that $\lapk w \in \Vkdj$, $\lapk z \in \Vj$ and $ \lapk q \in \Vj$. If we integrate by parts and use the condition $\divv v =0$, then  we obtain
\eqq{\langle \lap v_{,t}, \lap w\rangle  -  (\lap \n{ v\otimes v},\nabla \lap w)  +     \n{ \lap \n{ \frac{b}{\om}  D(v)}, D(\lap w)} = 0,}{gbhb}
\eqnsl{ \langle\lap \om_{,t}, \lap z\rangle  - (v \nabk \om ,  \nabla \lap z  )- (\nab \om \nab v ,\nab \lap z) + &   \n{ \lap \n{\frac{b}{\om}   \nabla \om}, \nabla \lap z  } \\ &  = - \kd (\lap \n{\om^{2}}, \lap z ),  }{cbghb}
\eqnsl{ \langle \lap b_{,t}, \lap q\rangle   - (v \nabk b, \nab \lap q  &  )- (\nab b \nab \om , \nab \lap q)   +\n{  \lap \n{ \frac{b}{\om}  \nabla b}, \nabla \lap q  } \\
&  = - (\lap \n{b \om} , \lap q) - \left( \nab \n{\frac{b}{\om} \bk{D(v)}}, \lap \nab q\right) , }{ccgnob}
for a.a. $t\in (0, \Ts)$,  where $\langle \cdot , \cdot \rangle $ denotes duality pairing between $\V^{1}(\Om)$ and $(\V^{1})^{*}$. The density argument and regularity of $(v, \om, b)$   allow us to test the system (\ref{gbhb})-(\ref{ccgnob}) by solution thus, we obtain
\eqq{\jd \ddt \ndk{ \lap v}  -  (\lap \n{ v\otimes v},\nabla \lap v)  +     \n{ \lap \n{ \frac{b}{\om}  D(v)}, D(\lap v)} = 0, }{gbhc}
\eqnsl{
\jd \ddt \ndk{\lap \om}   - (v & \nabk  \om,  \nab \lap \om   ) - (\nab \om \nab v , \nab \lap \om) \\
&   + \n{ \lap \n{\frac{b}{\om} \nabla \om}, \nabla \lap \om  } = - \kd (\lap \n{\om^{2}}, \lap \om ),  }{cbghc}
\eqnsl{
\jd \ddt \ndk{  \lap b}  -  (v  \nabk b, & \nab \lap b   )- (\nab b  \nab \om , \nab \lap b)   +\n{  \lap \n{ \frac{b}{\om}  \nabla b}, \nabla \lap b  } \\
& = - (\lap \n{b \om} , \lap b) - \left( \nab \n{\frac{b}{\om} \bk{D(v)}}, \nab \lap  b\right) }{ccgnoc}
for a.a. $t\in (0, \Ts)$. In the above equations some terms are similar and can be treated in the same way. To simplify further calculations let us analyse these terms first. One of them  has the following form
\[
\n{ \lap \n{\frac{b}{\om} \nabla f}, \nabla \lap f }.
\]
%Without losing generality gradient can be substituted for %symmetric gradient in following calculations.
%First using integration by parts and using periodic boundary %conditions we obtain
In this case we may write
\eqnsl{
& \n{ \lap \n{\frac{b}{\om} \nab f}, \nab \lap f }  =\n{ \frac{b}{\om} \nal f, \nal f } + 2 \n{ \nabk f \cdot \nab \n{\frac{b}{\om}} , \nal f } \\
 + &
\n{ \lap \n{\frac{b}{\om}} \nab f, \nal f }=   \n{ \frac{b}{\om} \nal f, \nal f } +  2 \n{\frac{1}{\om} \nabk f \cdot \nab b , \nal f } \\
-&  2 \n{  \frac{b}{\om^2} \nabk f \cdot \nab \om , \nal f } +\n{ \frac{\lap b}{\om} \nab f, \nal f }
- 2 \n{ \frac{(\nab b\cdot \nab \om )}{\om^2} \nab f, \nal f } \\
- &   \n{ \frac{b}{\om^2} \lap \om \nab f, \nal f }+ 2\n{ \frac{b}{\om^3} \bk{\nab \om} \nab f, \nal f }.
}{pom1}
On the right-hand side we can control the sign only of the first term hence, to simplify the future calculations we define $W(f)$ using the last six expressions, i.e.
\eqq{
\n{ \lap \n{\frac{b}{\om} \nab f}, \nab \lap f } =
\n{ \frac{b}{\om} \nal f, \nal f }+ W(f).
}{diffTm-lapk}
Similarly we define $\wt{W}(v)$
\eqq{
\n{ \lap \n{\frac{b}{\om} D(v) }, D( \lap v) } =
\n{ \frac{b}{\om} D(\lap v) ,  D (\lap v) }+ \wt{W}(v).
}{diffTm-lapkv}
Using this notation the  system (\ref{gbhc})-(\ref{ccgnoc}) may be written in the  following way
\eqnsl{
\jd \ddt \ndk{\lap v}  +  \n{ \frac{b}{\om}  D(\lap v), D(\lap v)} = (\lap \n{v \otimes v},\nab \lap v) - \wt{W}(v)}{v-lapkTesteda}
\eqnsl{
\jd \ddt \ndk{\lap \om} + \n{ \frac{b}{\om} \nal \om, \nal \om  } = - \kd (\lap (\om^{2}), \lap \om ) + (v \nabk \om , \nabla \lap \om  ) \\ +(\nab \om \nab v, \nal \om) - W(\om),  }{om-lapkTesteda}
\eqnsl{
\jd \ddt \ndk{\lap b} + \n{  \frac{b}{\om}  \nal b, \nal b  } = (v\nabk b, \nal b )+ (\nab b \nab v , \nal b)
 \\ -(\lap \n{b \om},  \lap b) - \left(\nab \n{ \frac{b}{\om} \bk{D(v)}}, \nab \lap b\right) - W(b).
}{b-lapkTesteda}
We recall that by applying (\ref{mnitDef}) and (\ref{inf-EST-b-omega}) we get the bound from below
\eqq{
\mnit \leq \frac{b}{\om}
}{below}
thus, from (\ref{v-lapkTesteda}) we obtain
\eqnsl{
\jd \ddt \ndk{\lap v} + \mnit \ndk{D( \lap v)} & \le 2 ( \lap v \otimes v , \nal v ) +  2( \nab v \otimes \nab v , \nal v ) - \wt{W}(v) .
}{v-lapk-tested-2}
To estimate the right-hand side we use the H\"older inequality and we get
\[
2( \lap v \otimes v , \nal v ) +  2( \nab v \otimes \nab v , \nal v ) \leq 2\nt{v} \ns{ \lap v } \nd{ \nal v }
+ \nck{ \nab v } \nd{ \nal v }.
\]
Then, after applying Sobolev inequalities  and Gagliardo-Nierenberg inequality (\ref{gag50}) we get
\[
2( \lap v \otimes v , \nal v ) +  2( \nab v \otimes \nab v , \nal v ) \leq C( \nt{ v} +\nd{ \nab v })\ndk{ \nabt v},
\]
where $C$ depends only on $\Om$. If we use the interpolating inequality
\[
\nt{ v} \leq C \nd{v}^{\jd} \nd{ \nab v }^{\jd},
\]
then we obtain
\eqq{
\jd  \ddt \ndk{\lap v } + \mnit  \ndk{ D(\lap v) } \leq  C
\n{\nd{ v }^{\frac{1}{2}} \nd{ \nab v }^{\frac{1}{2}} + \nd{ \nab v } }
 \ndk{ \nabt  v}  - \wt{W}(v).
}{est_v_lap_GLOB}
Now we focus our attention on the equation (\ref{om-lapkTesteda}). After applying  (\ref{below}) we get
\eqn{
\jd \ddt \ndk{\lap \om}  + \mnit \ndk{\nab \lap \om}  \le (v\nabk \om, \nal \om)+ & (\nab \om \nab v, \nal \om)\\
& -2 \kd (\bk{\nab \om}, \lap \om )
 - W(\om),  }
where we used the nonnegativity of $ 2\kd \n{ \om \lap \om , \lap  \om   }$. By H\"older inequality we have
\eqn{
\jd \ddt \ndk{ \lap \om} + \mnit \ndk{ \nal \om}  \le  \nt{v} \ns{\nabk \om}&  \nd{\nal \om} +
2\nc{\nab \om} \nc{ \nab v} \nd{\nal \om}
 \\
&
+2 \kd \norm{\nab \om}_{\frac{6}{5}} \nif{\nab \om} \ns{\lap \om}
- W(\om).
}
After applying the estimate (\ref{gag_inf_lap}) to the term $\nif{\nab \om}$ and (\ref{est-zero-3}) to term $ \nt{v}$ we obtain
\eqn{
\jd \ddt \ndk{ \lap \om} + \mnit \ndk{ \nal \om}  \le
  C \Big(
\nd{v}^{\jd}  \nd{\nab v}^{\jd}
& + \kd  \norm{\nab \om}_{\frac{6}{5}}
 \Big) \ndk{\nabt \om} & \\ &
+  2\nc{\nab \om} \nc{ \nab v} \nd{\nal \om} - W(\om) .
}
If we use the inequality (\ref{est-nab-4-kw}) then we get
\eqn{
\jd \ddt \ndk{ \lap \om} + \mnit \ndk{ \nal \om} & \le C \Big(
\nd{v}^{\jd}\nd{\nab v}^{\jd}
+ \kd \norm{\nab \om}_{\frac{6}{5}} \Big) \ndk{\nabt \om} \\
& + C\norm{\nab \om}_{\td}^{\jd} \norm{\nab v}_{\td}^{\jd} \nd{\nabt \om}^{\frac{3}{2}} \nd{\nabt v}^{\frac{1}{2}}  - W(\om) ,
}
where $C$ depends only on $\Om$. So finally, after applying the Young inequality with exponents  ($\frac{4}{3}$, 4) we obtain
\[
\jd \ddt \ndk{ \lap \om} + \mnit \ndk{ \nal \om}
\]
\eqnsl{
\le C \Big(
\nd{v}^{\jd}\nd{\nab v}^{\jd}
+ \kd \norm{\nab \om}_{\frac{6}{5}}
 + \norm{\nab \om}_{\td}^{\jd} \norm{\nab v}_{\td}^{\jd}
 \Big) \n{ \ndk{\nabt \om} + \ndk{\nabt v} }  - W(\om).
}{est_om_lap_GLOB}
Now, let us turn our attention to equation (\ref{b-lapkTesteda}). We integrate by parts
\eqn{
- (\lap \n{b \om}, \lap b )
& =
- ( \om \lap b, \lap b )
- 2(\nab \om \nab b, \lap b )
- ( b   \lap \om, \lap b ),
}
\eqn{
\n{ \nab \n{\frac{b}{\om} |D(v)|^2}, \nal b}  & \\
=
\n{\frac{\nab b}{\om} |D(v)|^2 , \nal b }
-\n{\frac{b\nab \om }{\om^2}  |D(v)|^2 , \nal b } &
+ 2 \n{\frac{b}{\om}  D(v) \nab D(v) , \nal b}.
}
Using the above calculations we may write  (\ref{b-lapkTesteda}) in the following form
\[
\jd \ddt \ndk{\lap b} + \n{ \frac{b}{\om} \nal b, \nal b }
 =(v \nabk b , \nal b ) +
(\nab b \nab v, \nal b )
 - ( \om \lap b, \lap b )
\]
\[
- 2(\nab \om \nab b, \lap b ) - ( b \lap \om, \lap b )
 -\n{\frac{\nab b}{\om} |D(v)|^2 , \nal b }
+\n{\frac{b\nab \om }{\om^2}  |D(v)|^2 , \nal b }
\]
\[
- 2 \n{\frac{b}{\om}  D(v) \nab D(v) , \nal b}
 - W(b).
\]
The third term on the right-hand side is non-positive hence, if we  use (\ref{below}), then  we get
\[
\jd \ddt \ndk{\lap b} + \mnit \ndk{ \nal b }
 \le
\]
\[
(\nab b \nab v, \nal b ) + (v \nabk b , \nal b )
 - 2(\nab \om \nab b, \lap b ) - ( b \lap \om, \lap b )
 -\n{\frac{\nab b}{\om} |D(v)|^2 , \nal b }
\]
\eqq{
+\n{\frac{b\nab \om }{\om^2}  |D(v)|^2 , \nal b }
- 2 \n{\frac{b}{\om}  D(v) \nab D(v) , \nal b}
 - W(b).
}{b-lapk-tested-2}
From  H\"older inequality we obtain
\[
\jd \ddt \ndk{ \lap b} + \mnit \ndk{ \nal b}  \le
  \nc{\nab b } \nc{\nab v} \nd{ \nal b}
+ \nt{v} \ns{\nabk b} \nd{ \nal b}
\]
\[
+2 \norm{\nab \om}_{\frac{6}{5}} \nif{\nab b} \ns{\lap b}
+ \norm{b}_{\frac{3}{2}} \ns{\lap \om} \ns{\lap b} + \nif{\re{\om}} \ns{ \nab b} \nsk{ D (v)} \nd{\nal b}
\]
\[
+ \nif{\re{\om}}^2 \nif{b} \ns{\nab \om} \nsk{ D (v)} \nd{\nal b}
\]
\[
+ 2\nif{\re{\om}}  \nif{b} \ns{ \nab D (v)} \nt{ D (v)} \nd{\nal b} -W(b).
\]
Now, we estimate the right-hand side by applying Gagliardo-Nirenberg inequalities
\[
\m{ \hd by } (\ref{est-nab-4-kw}): \hspace{1.2cm} \hd  \hd \nc{\nab b } \nc{\nab v} \nd{ \nal b} \leq c \norm{ \nab b }^\jd_\td \norm{ \nab v }^\jd_\td \nd{ \nabt b }^\jd \nd{ \nabt v}^\jd,
\]
\[
\m{ \hd by } (\ref{est-zero-3}), (\ref{Sobolev}): \hspace{2.4cm} \hd  \hd \nt{v} \ns{\nabk b} \nd{ \nal b} \leq c \nd{\nab v}^\jd \nd{v}^\jd \ndk{\nabt b},
\]
\[
\m{ \hd by } (\ref{Sobolev}), (\ref{gag_inf_lap}): \hspace{3.0cm} \hd \hd
\norm{\nab \om}_{\frac{6}{5}} \nif{\nab b} \ns{\lap b} \leq c \norm{\nab \om}_{\frac{6}{5}} \ndk{\nabt b} ,
\]
\[
\m{ \hd by } (\ref{32Est}), (\ref{Sobolev}): \hd \hd
\norm{b}_{\frac{3}{2}} \ns{\lap \om} \ns{\lap b} \leq c (\norm{\nab b}^\jd_\td \nj{b}^\jd + \nj{b}) \nd{ \nabt \om } \nd{\nabt b},
\]
\[
\m{\hd by  }  (\ref{inf-EST-b-omega}), (\ref{Sobolev}), (\ref{est-nab-6}): \hspace{0.5cm} \hd \hd \nif{\re{\om}} \ns{ \nab b} \nsk{ D (v)}
 \nd{\nal b} \hspace{3.5cm}
\]
\[
\hspace{6.2cm} \leq c (\omt)^{-1} \nd{\nabk b} \nd{ \nab v} \nd{ \nabt v} \nd{\nabt b},
\]
\[
\m{\hd by  }  (\ref{inf-EST-b-omega}), (\ref{gag_inf_lap_j}), (\ref{Sobolev}), (\ref{est-nab-6}): \hd \hd \nif{\re{\om}}^2 \nif{b} \ns{\nab \om} \nsk{ D (v)} \nd{\nal b} \hspace{2cm}
\]
\[
\hspace{3.7cm} \leq c (\omt)^{-2} (\nd{ \nabk b} + \nj{b}) \nd{ \nabk \om} \nd{ \nab v} \nd{ \nabt v} \nd{ \nabt b},
\]
\[
\m{\hd by  }  (\ref{inf-EST-b-omega}), (\ref{gag_inf_lap_j}), (\ref{Sobolev}), (\ref{est-zero-3}) : \hd \hd \nif{\re{\om}}  \nif{b} \ns{ \nab D (v)} \nt{ D (v)} \nd{\nal b} \hspace{1.3cm}
\]
\[
\hspace{3cm} \leq c (\omt)^{-1} (\nd{ \nabk b} + \nj{b}) \nd{ \nabt v} \nd{\nab v }^{\jd} \nd{ \nabk v}^{\jd } \nd{ \nabt b},
\]
where $c$ depends only on $\Om$. Thus, if we apply Young inequality to separate the norms of the third order derivatives, then we obtain
\eqnsl{
\jd \ddt \ndk{ \lap b} + \mnit \ndk{ \nal b}  \le c \Big(
\norm{\nab b}_{\td}^{\jd} \norm{\nab v}_{\td}^{\jd}
+ \nd{\nab v}^{\jd}\nd{ v}^{\jd}
+ \norm{\nab \om}_{\frac{6}{5}}
+ \norm{\nab b}^\jd_\td \nj{b}^\jd  \\
+ \nj{b} + (\omt)^{-1} \nd{ \nabk b} \nd{ \nab  v}
+ (\omt)^{-2} \n{\nd{\nabk b} + \nj{b}} \nd{\nabk \om} \nd{ \nab  v} \\
+ (\omt)^{-1} \n{\nd{\nabk b} + \nj{b}} \nd{ \nab v}^{\jd} \nd{ \nabk v}^{\jd}
 \Big)  \cdot \Big( \ndk{\nabt v} + \ndk{\nabt \om} + \ndk{\nabt b} \Big)- W(b),
}{b-lapk-tested-3}
where $c$ depends only on $\Om$. We note that after integration by parts we get $\nd{\nabk f} =\nd{ \lap f}$ for $f\in \mathcal{V}^{1}$ and  $2 \ndk{ D(\lap v)}= \ndk{ \nabt v }$ (see (47) \cite{KoKu}) hence, if we  sum the inequalities (\ref{est_v_lap_GLOB}), (\ref{est_om_lap_GLOB}) and (\ref{b-lapk-tested-3}), then we obtain
\eqn{
\jd  \ddt  \n{ \ndk{\lap v } + \ndk{\lap \om } + \ndk{\lap b } }
+ \mnit \n{ \ndk{\nal v} + \ndk{\nal \om} + \ndk{\nal b} } \\
 \le C
\Big(
\nd{ v }^\jd \nd{ \nab v }^\jd
+ \nd{\nab v}
+  \norm{\nab \om}_{\frac{6}{5}}
+ \norm{\nab \om}_{\td}^{\jd} \norm{\nab v}_{\td}^{\jd}
 + \norm{\nab b}_{\td}^{\jd} \norm{\nab v}_{\td}^{\jd} \\
+ \norm{\nab b}^\jd_\td \nj{b}^\jd + \nj{b}
+ (\omt)^{-1}  \nd{ \nabk b} \nd{ \nab v}
+ (\omt)^{-2} \nd{\nabk b} \nd{\nabk \om} \nd{ \nab v} \\
+ \frac{\nj{b}}{(\omt)^2}  \nd{\nabk \om} \nd{ \nab v}
+ (\omt)^{-1} \nd{\nabk b} \nd{ \nab v}^{\jd} \nd{ \nabk v}^{\jd}
+ \frac{\nj{b}}{\omt}  \nd{ \nab v}^{\jd} \nd{ \nabk v}^{\jd}
\Big) \\
 \cdot \Big( \ndk{\nal v} + \ndk{\nal \om} + \ndk{\nal b} \Big)  - \wt{W}(v) - W(\om) - W(b),
}
where $C$ depends only on $\kd$ and $ \Omega $. Before we estimate the last three terms we will introduce the following notation
\eqnsl{
& X_0(t) := \ndk{v(t)} + \nj{b(t)}^2, \\
&
X_1(t) := \ndk{\nab v(t)} + \ndk{\nab \om(t)} + \ndk{\nab b(t)}, \\
&
X_2(t) := \ndk{\lap v(t)} + \ndk{\lap \om(t)} + \ndk{\lap b(t)}, \\
&
X_3(t) := \ndk{\nal v(t)} + \ndk{\nal \om(t)} + \ndk{\nal b(t)}.
}{X_k_def}
After using the H\"older inequality we obtain
\[
\jd  \ddt  X_2 + \mnit X_3  \le C
\Big(
 X_0^\jc X_1^\jc
+ X_1^\jd+ \nj{b}
+ (\omt)^{-1} X_1^\jd X_2^\jd
+ (\omt)^{-2} X_1^\jd X_2
\]
\eqq{
+ \frac{\nj{b}}{(\omt)^2}  X_1^\jd X_2^\jd
+ (\omt)^{-1} X_1^\jc X_2^\tc
+ \frac{\nj{b}}{\omt}  X_1^\jc X_2^\jc
 \Big) \cdot X_3  - \wt{W}(v) - W(\om) - W(b),
}{est4Sum}
where $C$ depends only on $\kd$ and $ \Omega $.
Now, we need to estimate terms $ \wt{W}(v) $, $ W(\om) $, $ W(b) $, which were defined by (\ref{pom1})-(\ref{diffTm-lapkv}). In each case the estimates are similar thus, we consider $W(f)$ for general $f\in \mathcal{V}^{3}$. In this case we have
\eqn{
|W(f)|  \le  2 & \nif{\re{\om}} \nt{ \nab b} \ns{ \nabk f} \nd{\nal f}
+ 2 \nif{\re{\om}}^2 \nif{b} \nt{\nab \om} \ns{\nabk f} \nd{\nal f} \\
 + &  \nif{\re{\om}} \ns{ \lap b } \nt{\nab f } \nd{ \nab \lap f}
+ 2 \nif{\re{\om}}^2 \ns{ \nab b} \ns{ \nab \om} \ns{ \nab f} \nd { \nal f } \\
+ &
 \nif{\re{\om}}^2 \nif{b} \ns{ \lap \om} \nt{\nab f} \nd{\nal f}
+ 2 \nif{\re{\om}}^3 \nif{b} \ns{ \nab \om}^2 \ns{ \nab f} \nd{\nal f}. \\
}
As earlier, we use (\ref{inf-EST-b-omega}) and (\ref{est-zero-3})-(\ref{32Est}) and we have
\[
|W(f)|
\]
\eqns{
  \le \frac{c}{\omt} \Big(
  \nd{ \nab b}^\jd \nd{ \lap b}^\jd \nd{\nabt f}
+ (\omt)^{-1} \n{\nd{\lap b} + \nj{b} } \nd{ \nab \om}^\jd \nd{ \lap \om}^\jd & \nd{\nabt f} \\
+  \nd{ \nab f }^\jd \nd{ \lap f }^\jd \nd{ \nal b}
+ (\omt)^{-1} \nd{\nab b}^\jd \nd{\nab \om}^\jd \nd{ \lap f} \nd{\nal \om}^\jd & \nd{\nal b}^\jd \\
+ (\omt)^{-1} \n{ \nd{\lap b} + \nj{b} }  \nd{ \nab f}^\jd \nd{ \lap f}^\jd & \nd{ \nal \om} \\
+ (\omt)^{-2} \n{ \nd{\lap b} + \nj{b} } \nd{\nab \om} \nd{ \lap f} \nd{\nal \om}
  \Big) & \nd { \nal f }, \\
}
where $c$ depends only on $\Om$. We obtain an analogous estimate for $\wt{W}(v)$. Then, if we use the notation (\ref{X_k_def}), then  we obtain
\eqn{
|\wt{W}(v)|+|W(\om)|+|W(b)| \\
  \le \frac{c}{\omt} \Big(
   X_1^\jc X_2^\jc
+ (\omt)^{-1} X_1^\jc X_2^\tc
+ \frac{\nj{b}}{\omt} X_1^\jc X_2^\jc
+   X_1^\jc X_2^\jc
+ (\omt)^{-1} X_1^\jd X_2^\jd \\
+ (\omt)^{-1} X_1^\jc X_2^\tc
+ \frac{\nj{b}}{\omt} X_1^\jc X_2^\jc
+ \re{(\omt)^2} X_1^\jd X_2
+ \frac{\nj{b}}{(\omt)^2} X_1^\jd X_2^\jd
\Big) \cdot X_3,
}
where $c$ is as earlier. We simplify  further
\[
|\wt{W}(v)|+|W(\om)|+|W(b)| \le \frac{c}{(\omt)^{2}} \cdot   \hspace{6cm}
\]
\eqq{
\cdot   \left(
  \n{\omt+\nj{b}} X_1^\jc X_2^\jc
+ \n{ 1 + \frac{\nj{b}}{\omt}} X_1^\jd X_2^\jd
+  X_1^\jc X_2^\tc
+ (\omt)^{-1} X_1^\jd X_2
\right) \cdot X_3
}{W_est}
and $c$  depends only on $\Om$.  Using this estimate in (\ref{est4Sum}) we get
\eqnsl{
\jd  \ddt  X_2 + \mnit X_3  \le C
\Big(
X_0^\jc X_1^\jc
+ X_1^\jd +\norm{b}_{1}
+  \n{\re{\omt} + \frac{\nj{b}}{\omt} + \frac{\nj{b}}{(\omt)^2} } X_1^\jc X_2^\jc \\
+ \n{\re{\omt} + \re{(\omt)^2} + \frac{\nj{b}}{(\omt)^2} + \frac{\nj{b}}{(\omt)^3}} X_1^\jd X_2^\jd
+ \n{\re{\omt} + \re{(\omt)^2}} X_1^\jc X_2^\tc \\
+ \n{\re{(\omt)^2} + \re{(\omt)^3}} X_1^\jd X_2
 \Big) \cdot X_3,
}{est4Sum_v1}
where $C=C(\Omega, \kd)$. After applying the  Poincar\'e inequality we get $ X_1 \le C^{2}_{p} X_2 $ thus, we may simplify further
\eqnsl{
\jd  \ddt  X_2 + \mnit X_3  \le C
\Big(
X_0^\jc X_2^\jc + \norm{b}_{1}
+  \n{1 + \re{\omt} + \frac{\nj{b}}{\omt} + \frac{\nj{b}}{(\omt)^2} } X_2^\jd \\
+ \n{\re{\omt} + \re{(\omt)^2} + \frac{\nj{b}}{(\omt)^2} + \frac{\nj{b}}{(\omt)^3}} X_2
+ \n{ \re{(\omt)^2} + \re{(\omt)^3}} X_2^\frac{3}{2}  \Big) \cdot X_3.
}{est4Sum_v2}
By (\ref{mnitDef}) and (\ref{b-l1-est}) we have $\nj{ b(t) }\leq \bma(t)$ hence, using (\ref{def_A}), (\ref{noa}) and  (\ref{X_k_def}) we get
\eqns{
X_0^\jc(t) & \leq \n{ \ndk{
v_{0}}
\exp \n{ - \frac{2\bmi \n{\n{1 + \kd \oma t }^{2 - \frac{1}{\kd}}-1} }{C^{2}_{p} \oma^{2} \n{2 \kd - 1 }}   }
+ \bma^{2}(t)}^{\jc} \\
& \equiv A(t)
}
and we obtain
\[
X_0^\jc X_2^\jc + \norm{b}_{1} \leq
A(t) X_2^{\jc}+ \bmmt.
\]
Applying this inequality in (\ref{est4Sum_v2}) we get
\eqnsl{
\ddt & X_2 + 2 \mnit X_3  \le C_{\Om,\kd}
\Big( \bmmt +
A(t) X_2^\jc
+ B(t) X_2^\jd
+ C(t) X_2
+ D(t) X_2^\frac{3}{2}  \Big) \cdot X_3,
}{est4Sum_v3}
where $C_{\Om,\kd}$ depends only on $\Om$, $\kd$ and  we used the notation (\ref{def_B})-(\ref{def_D}). We denote
\eqnsl{
Z(t) & =
\Big( \bmmt +
A(t) X_2^\jc
+ B(t) X_2^\jd
+ C(t) X_2
+ D(t) X_2^\frac{3}{2}
\Big).
}{def_Z}
Thus,   the inequality (\ref{est4Sum_v3}) may be written in the following form
\eqn{
\ddt X_2(t) + \n{\mnit - \comkd Z(t)}  X_3(t) & \le -\mnit X_3(t).
}
By Poincar\'e inequality we get
\eqnsl{
\ddt X_2(t) + \n{\mnit - \comkd Z(t)}  X_3(t) & \le - \frac{\mnit}{C^{2}_{p}} X_2(t).
}{key1}
By definition (\ref{Y_0t}) and (\ref{X_k_def}) we have $Y_{2}(0)=X_{2}(0)$ hence, using (\ref{def_Z0})  and (\ref{def_Z}) we get $Z_{0}(0)=Z(0)$. Next, by assumption (\ref{GLOB_ADD}) we have
\[
\frac{\bmi}{\oma}- \comkd Z_{0}(0)>0
\]
thus, we have
\[
\frac{\bmi}{\oma}- \comkd Z(0)>0.
\]
We note that $(v, \om ,b)\in L^{2}([0,\Ts);H^{3}(\Om))$ and $(v_{,t}, \om_{,t} ,b_{,t})\in L^{2}([0,\Ts);H^{1}(\Om))$ hence, we have $X_{2}\in C([0,\Ts))$. Therefore, there are two possibilities:
\[
\forall t \in [0,T^{*}) \hd \hd \mnit- \comkd Z(t)>0 \hd \m{ or } \hd  \exists t^{*}\in (0,T^{*}) \hd \hd \mu^{t^*}_{\min} - Z(t^*) = 0.
\]
In the first case, the inequality (\ref{key1}) gives a uniform estimate
\eqq{
\ndk{\lap v (t) }+ \ndk{\lap \om  (t) }+ \ndk{\lap b (t) } \leq \ndk{\lap v_{0}  }+ \ndk{\lap \om_{0}  }+ \ndk{\lap b_{0}  } \hd \m{ for } \hd t\in [0,\Ts).
}{contra}
By (\ref{noa})-(\ref{b-l1-est}) we have
\[
\nd{ v(t)} \leq \nd{v_{0}}, \hd  \nd{ \om(t)} \leq \nd{ \om_{0}},
\]
\[
 \nd{ b(t)} \leq c(\nd{ \nabk b(t)} + \nj{ b(t)}) \leq c(\nd{ \nabk b(t)} + \nj{ b_{0}} + \jd \ndk{ v_{0}}  )
\]
for $ t\in [0,\Ts)$,  where $c=c(\Om)$. These estimates together with  (\ref{contra}) give
\eqq{ \nsodk{v (t) }+ \nsodk{ \om  (t) }+ \nsodk{ b (t) } \leq c\n{\nsodk{ v_{0}  }+ \nsodk{ \om_{0}  }+ \nsodk{ b_{0}  }  }
 }{contrb}
for $ t\in [0,\Ts)$, where $c$ depends only on $\Om$. We denote the right-hand side of (\ref{contrb}) by $\delta$. We set $K=\{(\omt, \ommt, \bmt) : \hd t\in [0,\Ts] \}$. Then $K$ is compact subset of $\{(a,b,c): \hd 0<a\leq b, \hd 0<c \}$ and by Theorem~\ref{LOCALNE_TH} there exists $\tskd$ such that the problem (\ref{a})-(\ref{ddod}) with initial condition $(v(t), \om(t), b(t))$ can be extended to the interval $[t,t+\tskd)$, where $t $ is arbitrary in $[0,\Ts)$. For $t>\Ts-\tskd$ we obtain the contradiction with definition of $\Ts$ (see (\ref{defTgw})).

In the second case, using the continuity of $[0,T^{*})\ni t \mapsto \mnit- \comkd Z(t) $ we may assume that $t^{*}\in (0,T^{*})$ is the first point with this property, i.e. \m{$\mnit- \comkd Z(t)>0$} for $t\in [0,t^{*})$ and $\mu^{t^*}_{\min} - Z(t^*) = 0$. Then, from (\ref{key1}) we get
\eqn{
\ddt  X_2(t) \le -\frac{1}{C^{2}_{p}}\mnit X_2(t) \hd \m{ for } \hd t\in (0,t^{*}).
}
Using (\ref{mnitDef}) we may write
\eqn{
\ddt  X_2(t) \le - \frac{1}{C^{2}_{p}} \frac{\bmi}{\oma}
\n{1 + \kd \oma t}^{1 - 1/\kd} X_2(t) \hd \m{ for } \hd t\in (0,t^{*}).
}
Thus, after multiplying by appropriate exponential function we obtain the bound
\eqn{
X_2(t) & \le
X_2(0) \exp \n{-\frac{1}{C^{2}_{p}}\frac{\bmi }{(2\kd - 1)\oma^2} \n{ \n{1 + \kd \oma t}^{2 - 1/\kd} - 1}}  \hd \m{ for } \hd t\in (0,t^{*}).
}
By definition (\ref{Y_0t}), the above inequality means $X_{2}(t)\leq Y_{2}(t)$ for $t\in [0,t^{*})$ hence, we get $X_{2}(t^{*})\leq Y_{2}(t^{*})$.  If we use the definition (\ref{def_Z0}) and (\ref{def_Z}), then we deduce that $Z(t^*) \le Z_0(t^*)$ and then
\eqn{
0=\mu^{t^*}_{\min} - \comkd Z(t^*) \geq \mu^{t^*}_{\min} - \comkd Z_0(t^*) > 0
}
and we get a contradiction with the assumption (\ref{GLOB_ADD}). Thus, we obtain that $\Ts \geq T$ and the theorem~\ref{TW_GLOBAL} is proved.

It remains to prove Corollary~\ref{coro_glob}.

\begin{proof}[Proof of Corollary~\ref{coro_glob}]
We shall show that the condition (\ref{GLOB_ADD}) is satisfied for $T=\infty$.
Firstly, for $\kd \geq 1$ we note that from (\ref{Z1}) we obtain
\[
\frac{\bmi}{\oma} >2 \comkd \n{ \| b_{0}\|_{1} + \jd \ndk{ v_{0}} \n{1 + I_\infty \n{\kd, \frac{\omi}{\oma}, \frac{\bmi}{(\oma)^2}} } } \n{1 + \kd \omi t}^{-1}
\]
for $t\geq 0$ hence, after multiplying both sides by $\n{1 + \kd \omi t}^{1-\frac{1}{\kd}}$ we get
\eqq{\mnit \geq \frac{\bmi}{\oma} \n{1 + \kd \omi t}^{1-\frac{1}{\kd}} > 2 \comkd \bmmt .}{W1}
For $\kd \in \n{\jd, 1}$ we note that from (\ref{Z1.5}) we obtain
\eqns{
\frac{\bmi}{\oma} & \n{1 + \kd \oma t} \\
& >
2 \comkd \n{\frac{\oma}{\omi}}^\frac{1}{\kd} \n{ \| b_{0}\|_{1} + \jd \ndk{ v_{0}} \n{1 + I_\infty \n{\kd, \frac{\omi}{\oma}, \frac{\bmi}{(\oma)^2}} } }
}
for $t\geq 0$ hence, after multiplying both sides by $\n{1 + \kd \oma t}^{-\frac{1}{\kd}}$ we get
\[
\mnit >
2 \comkd \bmmt
 \n{\frac{\oma}{\omi}\cdot  \frac{1 + \kd \omi t}{1 + \kd \oma t}}^\frac{1}{\kd}.
\]
We note that the function  $\frac{1 + \kd \omi t}{1 + \kd \oma t} $ is decreasing and strictly greater than $\frac{\omi}{\oma}$ so, we have
\eqq{
\mnit >
2 \comkd \bmmt .}{W1.5}

Next, we shall show that $a_{0}$ is finite. Recall that $\kd > \jd$ and then  by (\ref{bMinMaxt}), (\ref{def_A})  we deduce that \m{$\jkt^{\frac{1}{\kd}-1}A(t)$} decays at infinity as $\jkt^{\frac{1}{2\kd}-1}$. Thus, the expression \m{$\jkt^{\frac{1}{\kd}-1}A(t)$} is uniformly bounded on $[0,\infty)$. Further, the remaining terms in definition $a_{0}$ can be estimated by expressions of the form $\jkt^{\alpha}Y_{2}^{\beta}(t)$, where $\alpha\leq 3 $ and $\beta>0$. We recall that the function  $Y_{2}(t)$ decays exponentially hence, $a_{0}$ is finite.

Finally, by (\ref{Z2}) we get $\frac{\bmi}{\oma}>a_{0} Y_{2}^{\frac{1}{4}}(t)$ for $t\in [0,\infty)$ thus, using the definition of $a_{0}$ we obtain
\[
\frac{\bmi}{\oma}>2\comkd \jkt^{\frac{1}{\kd}-1} \left(A(t)Y_{2}^{\frac{1}{4}}(t)+ B(t)Y_{2}^{\frac{1}{2}} + C(t)Y_{2}(t)+ D(t)Y_{2}^{\frac{3}{2}}(t)  \right)
\]
for $t\in [0,\infty)$ hence, we get
\eqq{
\mnit>2\comkd  \left(A(t)Y_{2}^{\frac{1}{4}}(t)+ B(t)Y_{2}^{\frac{1}{2}} + C(t)Y_{2}(t)+ D(t)Y_{2}^{\frac{3}{2}}(t)  \right).
}{W2}
If we sum (\ref{W1}) or (\ref{W1.5}) and (\ref{W2}) then, by definition (\ref{def_Z0}) we get $2\mnit > 2\comkd Z_{0}(t) $ hence,
the condition (\ref{GLOB_ADD}) holds for $T=\infty.$
\end{proof}

{\bf Acknowledgements } The authors would like to thank the anonymous referee for valuable remarks, which
significantly improve the paper.

\section{Appendix}
In this subsection we collect the special cases of Gagliardo-Nirenberg  inequalities used in the paper (for the original formulation and proof see \cite{Gagliardo}, \cite{Niremberg}, \cite{GagNirProof}). Here, the constant $c$ depends only on $\Om$ and we assume that $f$ is periodic function on $\Om$  it is sufficiently regular to make the right-hand side finite. Firstly, we recall
\eqnsl{
\nck{ \nab f } \le
c  \nd{ \nab f } \nd{ \nabt f }.
}{gag50}
The lower order term (say, $L^{2}$ norm) can be omitted, because $\iOm \nab f dx =0$, $\iOm \nabk f dx =0$ and from Poincar\'e inequality for functions with vanishing mean  we get
\[
\ndk{ \nab f}= \nd{ \nab f} \nd{ \nab f} \leq C_{1} \nd{ \nab f } \nd{ \nabd f}\leq C_{2} \nd{ \nab f } \nd{ \nabt f}  ,
\]
where $C_{1}$, $C_{2}$ depends only on Poincar\'e constant for $\Om$.  Next, we have
\eqnsl{
\nt{f}^{2} \le c \nd{\nab f} \nd{f} , \hd \m{ if } \hd \int_{\Om}fdx=0,
}{est-zero-3}
\eqq{\ns{f} \le c \nd{\nab f}  , \hd \m{ if } \hd \int_{\Om}fdx=0,}{Sobolev}
\eqnsl{
\ns{\nabla f}^{2} \le c \nd{\nabt f} \nd{\nab f},
}{est-nab-6}
\eqnsl{
\nck{\nab f} &
\le c\nd{ \nabt f } \norm{\nab f}_{\frac{3}{2}},
}{est-nab-4-kw}
\eqnsl{
\nif{f}  %C_1^*\nd{ \lap f}^{\frac{6}{7}}\nj{f}^{\frac{1}{7}} + C_2^* \nj{f} \overset{Y(\frac{7}{6},7)}{\le} C_1
\leq c(\nd{ \nabk f} + \nj{f}).
}{gag_inf_lap_j}
\eqnsl{
\nif{f} \le c \nd{ \nabk f}, \hd \m{ if } \hd \int_{\Om}fdx=0,
}{gag_inf_lap}
\eqnsl{
\norm{f}_\td \le c \norm{\nab f}^\jd_\td \nj{f}^\jd + c\nj{f},
}{32Est}
where $c$ depends only on $\Om$.

{\bf Statements and Declarations. } The authors declare that no funds, grants, or other support were received during the preparation of this manuscript. The authors have no relevant financial or non-financial interests to disclose.


\begin{thebibliography}{9}

\bibitem{BuM}
  M. Bulicek, J. Malek, \textit{Large data analysis for Kolmogorov's two equation model of turbulence}, Nonlinear Anal. Real World Appl. 50 (2019), 104-143.
  
\bibitem{Lew} R. T. Chac\'on,  R. Lewandowski, \textit{ Mathematical and numerical foundations of turbulence models and applications},  Birkh\"auser/Springer, New York, 2014.
    
\bibitem{Dreyfus}
P. Dreyfuss, \textit{Results for a turbulent system with unbounded viscosities: weak formulations, existence of solutions, boundedness and smoothness}, Nonlinear Anal. 68 (2008), no. 6, 1462-1478.

\bibitem{Fanelli}
F. Fanelli, R. Granero-Belinch\'on  \textit{Well-posedness and singularity formation for the Kolmogorov two-equation model of turbulence in 1-D}. (2021) arXiv preprint arXiv:2112.13454.


\bibitem{GagNirProof}
A. Fiorenza, M.R. Formica, T.G. Roskovec, F. \& Soudsk\'y,  \textit{Detailed proof of classical Gagliardo-Nirenberg interpolation inequality with historical remarks}, Z. Anal. Anwend., 40(2) (2021), 217-236.


\bibitem{Gagliardo}
E. Gagliardo,  \textit{Ulteriori propriet\`a di alcune classi di funzioni in pi\`u variabili}. Ricerche Mat., 8, 24-51 (1959).


\bibitem{Kolmog}
A.N. Kolmogorov, \textit{Equations of turbulent motion in an incompressible fluid}, Izv. Akad. Nauk SSSR, Seria fizicheska 6 (1-2) (1942) 56-58.
\bibitem{KoKu}
P. Kosewski, A. Kubica, \textit{Local in time solution to Kolmogorov's two-equation model of turbulence},  	Monatsh Math (2022),  https://doi.org/10.1007/s00605-022-01703-3.
\bibitem{MiNa}
  A. Mielke, J. Naumann, \textit{On the existence of global-in-time weak solutionsand scaling laws for Kolmogorov's two-equation model of turbulence},
  arXiv:1801.02039,
  2018.
\bibitem{MiNaa}
A. Mielke, J. Naumann,\textit{Global-in-time existence of weak solutions to Kolmogorov's two-equation model of turbulence},  C. R. Math. Acad. Sci. Paris 353 (2015), no. 4, 321-326.

\bibitem{NaumannV2} J. Naumann, \textit{On the existence of weak solutions to a model problem for the unsteady turbulent pipe-flow},
C. R. Math. Acad. Sci. Paris 351 (2013), no. 11-12, 451-456.

\bibitem{Niremberg}
L. Nirenberg,  \textit{On Elliptic Partial Differential Equations}. In: Faedo, S. (eds) Il principio di minimo e sue applicazioni alle equazioni funzionali. C.I.M.E. Summer Schools, vol 17. Springer, Berlin, Heidelberg. https://doi.org/10.1007/978-3-642-10926-3\_1.


\bibitem{Pares} C. Par\'{e}s, \textit{Existence, uniqueness and regularity of solution of the equations of a turbulence model for incompressible fluids} Appl. Anal. 43 (1992), no. 3-4, 245-296.


\bibitem{HB}
H.B. de Oliveira,  A. Paiva, A. \textit{A stationary one-equation turbulent model with applications in porous media},
J. Math. Fluid Mech. 20 (2018), no. 2, 263-287.

\bibitem{Prandtl}
L. Prandtl, K.  Wieghardt,\textit{\"{U}ber ein neues Formelsystem f\"{u}r die ausgebildete Turbulenz},  Nachr. Akad. Wiss. G\"{o}ttingen. Math.-Phys. Kl. Math.-Phys.-Chem. Abt. 1945 (1945), 6-19.

\bibitem{Spal}
  D. B. Spalding:
  \textit{Kolmogorov's two-equation model of turbulence},
    Proc. Roy. Soc. London Ser. A 434 (1991), no. 1890, 211-216.








%\bibitem{NaumannV3}
%Naumann, J. \textit{Degenerate Parabolic Problems in Turbulence Modelling}. Bullettin of the Gioenia Academy of Natural Sciences of Catania 46.376/SFE (2013): SFE18-SFE43.




\end{thebibliography}
\end{document}